\documentclass[11pt]{amsart}
\usepackage{amsopn}
\usepackage{amssymb, amscd}
\usepackage{paracol}
\usepackage{graphicx, graphics, epsfig}
\usepackage{faktor}
\usepackage{caption}
\usepackage{enumitem}
\usepackage{hyperref}
\usepackage{tikz-cd}
\globalcounter{figure}
\usepackage{xcolor}

\graphicspath{ {imagenes/} }

\topmargin 0cm
\evensidemargin 0.5cm
\oddsidemargin 0.5cm
\textwidth 15cm \textheight 23cm

\newcommand{\nc}{\newcommand}

\nc{\fg}{\mathfrak{f} } \nc{\vg}{\mathfrak{v} } \nc{\wg}{\mathfrak{w} }
\nc{\zg}{\mathfrak{z} } \nc{\ngo}{\mathfrak{n} } \nc{\kg}{\mathfrak{k} }
\nc{\mg}{\mathfrak{m} } \nc{\bg}{\mathfrak{b} } \nc{\ggo}{\mathfrak{g} }
\nc{\ggob}{\overline{\mathfrak{g}} } \nc{\sog}{\mathfrak{so} }
\nc{\sug}{\mathfrak{su} } \nc{\spg}{\mathfrak{sp} } \nc{\slg}{\mathfrak{sl} }
\nc{\glg}{\mathfrak{gl} } \nc{\cg}{\mathfrak{c} } \nc{\rg}{\mathfrak{r} }
\nc{\hg}{\mathfrak{h} } \nc{\tgo}{\mathfrak{t} } \nc{\ug}{\mathfrak{u} }
\nc{\dg}{\mathfrak{d} } \nc{\ag}{\mathfrak{a} } \nc{\pg}{\mathfrak{p} }
\nc{\sg}{\mathfrak{s} } \nc{\affg}{\mathfrak{aff} } \nc{\qg}{\mathfrak{q} }
\nc{\lgo}{\mathfrak{l} }

\nc{\pca}{\mathcal{P}} \nc{\nca}{\mathcal{N}} \nc{\lca}{\mathcal{L}}
\nc{\oca}{\mathcal{O}} \nc{\mca}{\mathcal{M}} \nc{\tca}{\mathcal{T}}
\nc{\aca}{\mathcal{A}} \nc{\cca}{\mathcal{C}} \nc{\gca}{\mathcal{G}}
\nc{\sca}{\mathcal{S}} \nc{\hca}{\mathcal{H}} \nc{\bca}{\mathcal{B}}
\nc{\dca}{\mathcal{D}} \nc{\val}{\operatorname{val}}

\nc{\vp}{\varphi} \nc{\ddt}{\frac{d}{dt}} \nc{\dds}{\frac{d}{ds}}
\nc{\dpar}{\frac{\partial}{\partial t}} \nc{\im}{\mathrm{i}}

\nc{\SO}{\mathrm{SO}} \nc{\Spe}{\mathrm{Sp}} \nc{\Sl}{\mathrm{SL}}
\nc{\SU}{\mathrm{SU}} \nc{\Or}{\mathrm{O}} \nc{\U}{\mathrm{U}} \nc{\Gl}{\mathrm{GL}}
\nc{\Se}{\mathrm{S}} \nc{\Cl}{\mathrm{Cl}} \nc{\Spein}{\mathrm{Spin}}
\nc{\Pin}{\mathrm{Pin}} \nc{\G}{\mathrm{GL}_n(\RR)} \nc{\g}{\mathfrak{gl}_n(\RR)}

\nc{\RR}{{\Bbb R}} \nc{\HH}{{\Bbb H}} \nc{\CC}{{\Bbb C}} \nc{\ZZ}{{\Bbb Z}}
\nc{\FF}{{\Bbb F}} \nc{\NN}{{\Bbb N}} \nc{\QQ}{{\Bbb Q}} \nc{\PP}{{\Bbb P}} \nc{\OO}{{\Bbb O}}

\nc{\vs}{\vspace{.2cm}} \nc{\vsp}{\vspace{1cm}} \nc{\ip}{\langle\cdot,\cdot\rangle}
\nc{\ipp}{(\cdot,\cdot)} \nc{\la}{\langle} \nc{\ra}{\rangle} \nc{\unm}{\tfrac{1}{2}}
\nc{\unc}{\tfrac{1}{4}} \nc{\und}{\tfrac{1}{16}} \nc{\no}{\vs\noindent}
\nc{\lam}{\Lambda^2(\RR^n)^*\otimes\RR^n} \nc{\tangz}{{\rm T}^{\rm Zar}}
\nc{\nor}{{\sf n}}  \nc{\mum}{/\!\!/} \nc{\kir}{/\!\!/\!\!/}
\nc{\Ri}{\tfrac{4\Ric_{\mu}}{||\mu||^2}} \nc{\ds}{\displaystyle}
\nc{\ben}{\begin{enumerate}} \nc{\een}{\end{enumerate}} \nc{\f}{\frac}
\nc{\lb}{[\cdot,\cdot]} \nc{\isn}{\tfrac{1}{||v||^2}}
\nc{\gkp}{(\ggo=\kg\oplus\pg,\ip)} \nc{\ukh}{(\ug=\kg\oplus\hg,\ip)}
\nc{\tgkp}{(\tilde{\ggo}=\kg\oplus\pg,\ip)}
\nc{\wt}{\widetilde}
\nc{\iop}{\mathtt{i}} \nc{\jop}{\mathtt{j}} \nc{\gk}{g_{\kil}}

\nc{\alp}{\alpha^2}  \nc{\bet}{\beta^2}  \nc{\gam}{\gamma^2} \nc{\et}{\eta^2}

\nc{\Hess}{\operatorname{Hess}} \nc{\ad}{\operatorname{ad}}
\nc{\Ad}{\operatorname{Ad}} \nc{\rank}{\operatorname{rank}}
\nc{\Irr}{\operatorname{Irr}} \nc{\End}{\operatorname{End}}
\nc{\Aut}{\operatorname{Aut}} \nc{\Inn}{\operatorname{Inn}}
\nc{\Der}{\operatorname{Der}} \nc{\Ker}{\operatorname{Ker}}
\nc{\Iso}{\operatorname{Iso}} \nc{\Diff}{\operatorname{Diff}}
\nc{\Lie}{\operatorname{L}} \nc{\tr}{\operatorname{tr}} \nc{\dif}{\operatorname{d}}
\nc{\sen}{\operatorname{sen}} \nc{\modu}{\operatorname{mod}}
\nc{\CRic}{\operatorname{PP}} \nc{\Cric}{\operatorname{P}} \nc{\Ricci}{\operatorname{Ric}}
\nc{\sym}{\operatorname{sym}} \nc{\herm}{\operatorname{herm}} \nc{\symac}{\operatorname{sym^{ac}}}
\nc{\symc}{\operatorname{sym^{c}}} \nc{\scalar}{\operatorname{Sc}}
\nc{\grad}{\operatorname{grad}} \nc{\ricci}{\operatorname{Rc}}
\nc{\Nor}{\operatorname{Norm}}  \nc{\ricc}{\operatorname{Rc^{c}}}
\nc{\Ricc}{\operatorname{Ric^{c}}} \nc{\ricac}{\operatorname{Rc^{ac}}}
\nc{\Ricac}{\operatorname{Ric^{ac}}} \nc{\Riem}{\operatorname{Rm}} \nc{\Sec}{\operatorname{Sec}}
\nc{\riccig}{\operatorname{ric^{\gamma}}} \nc{\Rin}{\operatorname{M}}
\nc{\kil}{\operatorname{B}} \nc{\cas}{\operatorname{C}} \nc{\Le}{\operatorname{L}}
\nc{\tang}{\operatorname{T}}
\nc{\level}{\operatorname{level}} \nc{\rad}{\operatorname{r}}
\nc{\abel}{\operatorname{ab}} \nc{\CH}{\operatorname{CH}} \nc{\Cone}{{\mathcal C}} \nc{\CCone}{\operatorname{CC}} \nc{\CP}{{\mathcal P}}
\nc{\mcc}{\operatorname{mcc}} \nc{\Adj}{\operatorname{Adj}}
\nc{\Order}{\operatorname{O}}  \nc{\inj}{\operatorname{inj}} \nc{\proy}{\operatorname{pr}}
\nc{\vol}{\operatorname{vol}} \nc{\Diag}{\operatorname{Dg}} \nc{\Diagg}{\operatorname{Diag}}
\nc{\Spec}{\operatorname{Spec}} \nc{\Ima}{\operatorname{Im}} \nc{\Rea}{\operatorname{Re}}
\nc{\spann}{\operatorname{span}} \nc{\Aff}{\operatorname{Aff}}
\nc{\mm}{\operatorname{m}} \nc{\id}{\operatorname{Id}} \nc{\Bb}{\operatorname{B}}
\nc{\lic}{\operatorname{L}}

\newtheorem{theorem}{Theorem}[section]
\newtheorem{proposition}[theorem]{Proposition}

\newtheorem{lemma}[theorem]{Lemma}

\theoremstyle{definition}
\newtheorem{definition}[theorem]{Definition}

\theoremstyle{remark}
\newtheorem{remark}[theorem]{Remark}

\newtheorem{assumption}[theorem]{Assumption}

\title{Generalized Ricci Flow on aligned homogeneous spaces}

\author{Valeria Guti\'errez}

\address{FAMAF, Universidad Nacional de C\'ordoba and CIEM, CONICET (Argentina)}
\email{valeria.gutierrez@unc.edu.ar}

\thanks{
This research was partially supported by a CONICET doctoral fellowship.}


\begin{document}

\begin{abstract}
The fixed points of the generalized Ricci flow are the Bismut Ricci flat metrics, i.e., a generalized metric $(g,H)$ on a manifold $M$, where $g$ is a Riemannian metric and $H$ a closed $3$-form, such that $H$ is $g$-harmonic and $\ricci(g)=\unc H_g^2$. Given two standard Einstein homogeneous spaces $G_i/K$, where each $G_i$ is a compact simple Lie group and $K$ is a closed subgroup of them holding some extra assumption, we consider $M=G_1\times G_2/\Delta K$. Recently, Lauret and Will proved the existence of a Bismut Ricci flat metric on any of these spaces. We proved that this metric is always asymptotically stable for the generalized Ricci flow  on $M$ among a subset of $G$-invariant metrics and, if $G_1=G_2$, then it is globally stable.
\end{abstract}

 \maketitle
 \tableofcontents

\section{Introduction}
In the context of generalized Riemannian geometry has arisen an extension of the Ricci flow equation called {\it generalized Ricci flow}. Given a manifold $M$ and a generalized metric encoded in the pair $(g,H)$, where $g$ is a Riemannian metric and $H$ a closed $3$-form on $M$, this flow studied in \cite{GrcStr} is given by
\begin{equation}\label{GRF}
\left\{\begin{array}{l}
\dpar g(t) = -2\ricci(g(t)) +\unm (H(t))_{g(t)}^2, \\ \\
\dpar H(t) = -dd_{g(t)}^*H(t),
\end{array}\right.
\end{equation}
where $H_g^2:=g(\iota_\cdot H,\iota_\cdot H)$ and $d_g^*$ is the adjoint of $d$ with respect to $g$.

A pair $(g,H)$ is naturally associated with {\it Bismut} connections, i.e., the unique metric connection on the Riemannian manifold $(M,g)$ with torsion equal to $H$, in this sense the generalized Ricci flow is the natural evolution in the direction of the Ricci tensor of this connection.

In search of canonical generalized geometry, Garc\'ia-Fernandez and Streets in \cite{GrcStr} proposed to look for the fixed points of this flow, the so-called {\it Bismut Ricci flat} (BRF for short) generalized metrics or generalized Einstein metrics, i.e., $(g,H)$ such that
\begin{equation*}
\ricci(g)=\unc H_g^2 \quad\mbox{and}\quad \mbox{$H$ is $g$-harmonic}.
\end{equation*}

Let $G_1, G_2$ be compact, connected, simple Lie groups and $K\subset G_1,G_2$ a closed Lie subgroup. Suppose that for $i=1,2$, there exist constants $0<a_1\leq a_2<1$ such that $\kil_{\kg}=a_i\kil_{\ggo_i}|_{\kg}$, where $\kg,\ggo_i$ are the Lie algebras of $K$ and $G_i$, respectively,  and $\kil_{\hg}$ is the Killing form of the Lie algebra $\hg$.  The homogeneous space defined by $M = G_1 \times G_2/\Delta K$ is aligned with $(c_1,c_2)=(\tfrac{a_1+a_2}{a_2},\tfrac{a_1+a_2}{a_1})$ and $\lambda_1=\dots=\lambda_t=\tfrac{a_1a_2}{a_1+a_2}$, as defined in \cite{H3}, and so its third Betti number is one.  Recently in \cite{BRFs}, Lauret and Will found a BRF $G$-invariant generalized metric on any aligned homogeneous space $M = G/K$, where $G$ is a compact semisimple Lie group with two simple factors generalizing results obtained in \cite{PdsRff1, PdsRff2}.

If $G:=G_1 \times G_2$, then we consider its Killing metric given by
\begin{equation}\label{gkil}
g_{\kil}=(-\Bb_{\ggo_1})+(-\Bb_{\ggo_2}),
\end{equation}
and let $\ggo=\kg\oplus\pg$ be the $g_{\kil}$-orthogonal reductive decomposition of $\ggo$. We fix the standard metric of $M = G_1 \times G_2/\Delta K$ determined by $g_{\kil}|_{\pg\times \pg}$ as a background metric and consider the $g_{\kil}$-orthogonal Ad($K$)-invariant decomposition
$$\pg=\pg_1\oplus\pg_2\oplus\pg_3,$$
where each $\pg_i$ is equivalent to the isotropy representation of the homogeneous space $M_i=G_i/K$ for $i=1,2$ and $\pg_3$ is equivalent to the adjoint representation $\kg$ (see \cite[Proposition 5.1]{H3}).

Following \cite{H3}, the closed $3$-form $H_0$ on M given by
{\footnotesize$$H_0(X,Y,Z) := Q([X,Y],Z) + Q([X,Y]_\kg,Z) - Q([X,Z]_\kg,Y) + Q([Y,Z]_\kg,X)  \quad  \text{for all\ }  X,Y,Z \in \pg,$$}
where $Q=\Bb_{\ggo_1}-\tfrac{a_2}{a_1}\Bb_{\ggo_2}$, is $g$-harmonic for any $G$-invariant metric $g=(x_1,x_2,x_3)_{g_{\kil}}$ of the form:
\begin{equation} \label{gg}
 g:=x_1g_{\kil}|_{\pg_1 \times \pg_1} + x_2g_{\kil}|_{\pg_2 \times \pg_2}+x_3g_{\kil}|_{\pg_3 \times \pg_3}, \qquad x_1,x_2,x_3>0,
\end{equation}
these metrics wil be called \textit{diagonal}.

The $G$-invariant BRF generalized metric found in \cite{BRFs} is given, up to scaling, by $(g_0,H_0)$ (see \cite[Remark A.3]{Corr}), where
\begin{equation}\label{brf-met}
g_0:=(1,\tfrac{a_2}{a_1},\tfrac{a_1+a_2}{a_1})_{g_{\kil}}.
\end{equation}

In this paper, we study the dynamical stability of this generalized metric as a fixed point of the generalized Ricci flow given in \eqref{GRF} on homogeneous spaces of the form $M = G_1 \times G_2/\Delta K$ as above, such that the standard metric on $M_i=G_i/K$ is Einstein  for $i=1,2$. Note that since each $G_i$ is simple, the spaces $M_i$ are given in \cite{Bss} (see also \cite{stab} and \cite{Rical}), there are $17$ families and $50$ isolated examples among irreducible symmetric, isotropy irreducible and non-isotropy irreducible homogeneous spaces. Note that we can consider $G_1=G_2=H$, in this case $a_1=a_2$. Our main result is the following.
\begin{theorem}
  Let $M = G_1 \times G_2/\Delta K$  be a homogeneous space as above such that $\kil_{\kg}=a_i \kil_{\ggo_i}|_{\kg}$ and $(M_i=G_i/K, g_{\kil}^i)$ is Einstein, where $g_{\kil}^i$ is the standard metric on each $M_i$ for $i=1,2$, then
  \begin{enumerate}[{label=(\roman*)}]
\item There exists a neighborhood $U$ of the metric $g_0=(1,\tfrac{a_2}{a_1},\tfrac{a_1+a_2}{a_1})_{g_{\kil}}$ in the space of all diagonal metrics, such that the generalized Ricci flow converges to $(g_0,H_0)$ starting at any generalized metric $(g,H_0)$ with $g$ in $U$. (See Theorem \ref{stable z11} and Proposition \ref{floww}).
\item   Let $M=H\times H/\Delta K$ (i.e., $a_1=a_2$) be a homogeneous space such that $(H/K, g_{\kil})$ is  Einstein and  $\kil_{\kg}=a\kil_{\hg}|_{\kg}$, then any diagonal generalized Ricci flow solution converges to the Bismut Ricci flat metric $(g_0,H_0)$, where $g_0=(1,1,2)_{g_{\kil}}$. (See Theorem  \ref{global}).
\end{enumerate}
\end{theorem}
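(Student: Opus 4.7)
My starting observation is that since $H_0$ is $g$-harmonic for every diagonal metric of the form \eqref{gg}, the second equation of \eqref{GRF} forces $H(t)\equiv H_0$ whenever $g(t)$ stays diagonal. The diagonal form is preserved by the flow because the decomposition $\pg=\pg_1\oplus\pg_2\oplus\pg_3$ is canonical---each $\pg_i$ is characterised as an $\Ad(K)$-isotypic piece, and both $\ricci(g)$ and $(H_0)^2_g$ are $\Ad(K)$-invariant symmetric $2$-tensors. Hence the flow reduces to an autonomous ODE system on $(x_1,x_2,x_3)\in\RR_{>0}^3$,
\[
\dot x_i = -2\,r_i(x_1,x_2,x_3) + \unm\, h_i(x_1,x_2,x_3), \qquad i=1,2,3,
\]
where $r_i$ and $h_i$ are the scalars by which $\ricci(g)$ and $(H_0)^2_g$ act on $\pg_i$. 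The first step of the proof is to derive explicit rational expressions for $r_i$ and $h_i$ using the Wang--Ziller formulas for the homogeneous Ricci tensor together with the hypothesis $\kil_\kg=a_i\kil_{\ggo_i}|_\kg$ and the Einstein condition on each $(M_i,g_{\kil}^i)$, and to confirm that $g_0=(1,\tfrac{a_2}{a_1},\tfrac{a_1+a_2}{a_1})_{\gk}$ is an equilibrium, recovering in coordinates the BRF existence of \cite{BRFs}.

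For part (i), I would compute the Jacobian $J_0$ of this ODE at $g_0$ and show that $\Rea(\lambda)<0$ for every eigenvalue $\lambda$ of $J_0$; the Hartman--Grobman theorem then yields a neighborhood $U$ of $g_0$ in the space of diagonal metrics from which $g(t)$ converges exponentially to $g_0$, while $H(t)\equiv H_0$ throughout by the reduction above. Thanks to the Einstein hypothesis on each $M_i$, the entries of $J_0$ should collapse to expressions depending only on $a_1,a_2$ and the two Einstein constants, reducing negativity of the spectrum to a concrete algebraic check; a clean alternative is to identify $g_0$ as a nondegenerate local maximum of the Perelman-type $\lambda$-functional for \eqref{GRF}, whose monotonicity is established in \cite{GrcStr}.

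For part (ii), the symmetry $G_1=G_2=H$ forces $a_1=a_2$ and renders the ODE invariant under $(x_1,x_2)\mapsto(x_2,x_1)$, with $g_0=(1,1,2)_{\gk}$ lying on the fixed-point set $\{x_1=x_2\}$. My plan is two-pronged: first, factor $(x_1-x_2)$ out of the difference $F_1-F_2$ of the right-hand sides of the ODE and show that the remaining coefficient is positive along every solution, yielding exponential decay of $|x_1-x_2|$ and therefore global attraction onto the diagonal slice; second, on the planar system in $(x_1,x_3)$ obtained by imposing $x_1=x_2$, use a Perelman-type functional such as $\scalar(g)-\tfrac{1}{12}|H_0|^2_g$ (monotone along \eqref{GRF} by \cite{GrcStr} and critical exactly at BRF metrics) as a Lyapunov function, together with a coercivity estimate preventing orbits from escaping to the boundary of the positive orthant, and conclude by the LaSalle invariance principle that every orbit converges to $g_0$.

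The principal obstacle I foresee is the global attraction in (ii): both the positivity of the coefficient produced by factoring $(x_1-x_2)$ and the coercivity of the Lyapunov functional on normalized diagonal metrics are genuinely analytic assertions that, unlike the purely spectral content of (i), must be extracted by careful manipulation of the explicit formulas for $r_i$ and $h_i$, and whose validity depends nontrivially on the particular aligned geometry of $M=H\times H/\Delta K$.
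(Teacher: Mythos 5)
Your treatment of part (i) is essentially the paper's: reduce to the autonomous ODE in $(x_1,x_2,x_3)$, check that $g_0$ is an equilibrium, and show that $Df(g_0)$ has negative spectrum (the paper computes the diagonal matrix $\mathrm{diag}(-1,\,1-c_1,\,c_1(\lambda-1))$ and concludes by the standard linearization theorem). One caveat on your reduction step: the subspaces $\pg_1,\pg_2$ need not be $\Ad(K)$-isotypic, and even block-diagonality of $\ricci(g)$ and $(H_0)^2_g$ would only keep the metric in the (possibly much larger) space of block-diagonal metrics, not in the three-parameter family $(x_1,x_2,x_3)_{\gk}$. The paper's Lemma \ref{inv_} closes this precisely with the Einstein hypothesis: $\cas_{\chi_i}=\kappa_i\id_{\pg_i}$, so every operator appearing in Propositions \ref{ricggals2}, \ref{HQ2s} and \ref{HQ2-s2} is scalar on each $\pg_j$. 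You should invoke that, not isotypicality.

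Part (ii) is where your route genuinely diverges and where the gap lies. Your first step (writing $x_1'-x_2'=\phi(x_1)-\phi(x_2)$ with $\phi$ strictly decreasing) does work when $c_1=2$ and gives monotone decay of $|x_1-x_2|$, but by itself it yields neither exponential decay nor convergence: the decay rate degenerates as $x_1,x_2\to\infty$, so you still need a priori bounds confining orbits to a compact subset of the positive orthant. That precompactness is exactly the hard analytic content, and your proposed tool --- LaSalle applied to $\scalar(g)-\tfrac{1}{12}|H_0|^2_g$ --- does not supply it: that functional is not monotone along the generalized Ricci flow as such (the monotone Perelman-type quantity involves the dilaton and a diffeomorphism gauge), and you give no coercivity estimate keeping solutions away from the boundary $x_i=0$ or from infinity. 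The paper avoids all of this by exhibiting an explicit quadratic Lyapunov function $V=\tfrac12\bigl(\tfrac{\lambda}{10}(x_1-1)^2+\tfrac{\lambda}{10}(x_2-1)^2+(x_3-2)^2\bigr)$ on the full three-dimensional system and proving $V'<0$ on $\RR^3_{>0}\setminus\{(1,1,2)\}$ by a direct case-by-case algebraic analysis of the resulting polynomial. As written, your part (ii) is a plan whose two load-bearing claims (coercivity and monotonicity of the candidate functional) are unestablished, so it does not yet constitute a proof.
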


\begin{remark}
If $M = G_1 \times G_2/\Delta K$ is multiplicity-free, i.e., $M_1,M_2$ are both isotropy irreducible, $K$ is simple, the Ad($K$)-representations $\pg_1$, $\pg_2$ are inequivalent and neither of them is equivalent to the adjoint representation $\kg$, then the metrics of the form given in \eqref{gg} are all the $G$-invariant metrics on the homogeneous space $M$.
\end{remark}

Finally, in Section \ref{so}, we give an overview on the generalized Ricci flow and its fixed points on simple compact Lie groups and an analysis of the stability on $\SO(n)$, the only known simple Lie group admitting a nice basis. Our result is the following.
\begin{theorem}
There exists a neighborhood $U$ of the Killing metric $g_{\kil}$ on the compact Lie group $\SO(n)$, such that any generalized Ricci flow solution starting at a diagonal metric in $U$ converges to $g_{\kil}$.
\end{theorem}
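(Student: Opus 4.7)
The plan is to follow the same dynamical-systems scheme used in Theorem 1 for aligned spaces: exploit a nice basis of $\sog(n)$ to reduce the generalized Ricci flow (restricted to diagonal left-invariant metrics and with $H=H_0$ the Cartan $3$-form $H_0(X,Y,Z)=g_{\kil}([X,Y],Z)$) to a finite-dimensional ODE system on $(0,\infty)^N$ with $N=\binom{n}{2}$, locate the Killing metric as a fixed point, linearize there, and invoke a stable-manifold argument.

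First I would record the standard nice basis $\{E_{ij}-E_{ji}\}_{i<j}$ of $\sog(n)$, which is orthonormal for a multiple of $g_{\kil}$ and whose structure constants $c_{ij}^k$ have the defining property of a nice basis (at most one of $c^k_{ij},c^j_{ik},c^i_{jk}$ can be nonzero). Writing a diagonal metric as $g=(x_1,\dots,x_N)$ in this basis, the key point is that both the Ricci tensor $\ricci(g)$ and the tensor $\tfrac12 (H_0)_g^2$ are again diagonal in the same basis (this is exactly the reason nice bases were introduced). Hence the space of diagonal metrics is invariant under \eqref{GRF}, and the system becomes $\dot x_i=F_i(x_1,\dots,x_N)$ with $F_i$ an explicit rational function of the $x_j$'s determined by the $c_{ij}^k$'s. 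The Killing metric corresponds to $x_1=\dots=x_N=1$, which is a Bismut Ricci flat fixed point by the overview of Section \ref{so}.

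Next I would compute the Jacobian $DF$ at this fixed point. By the transitive action of the Weyl group on the chosen basis, $DF|_{g_{\kil}}$ is highly symmetric and decomposes into a small number of blocks according to the partition of index pairs $\{(i,j)\}$ by the types determined by the structure constants; these blocks can be diagonalized explicitly. I expect exactly one zero eigenvalue along the overall scaling direction (since $(\lambda g_{\kil},H_0)$ remains BRF up to normalizing $H_0$) and strictly negative eigenvalues on a complement, reflecting that $g_{\kil}$ is the unique BRF diagonal metric up to scale. Passing to the volume-normalized flow kills the trivial direction, and the stable manifold theorem (equivalently, the Hartman--Grobman argument used in the aligned case, cf.\ Theorem \ref{stable z11}) then produces a neighborhood $U$ on which every normalized diagonal trajectory converges to $g_{\kil}$; the unnormalized flow converges to the same $g_{\kil}$ because it is a genuine fixed point.

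The main obstacle is Step~3: verifying that every nonzero eigenvalue of $DF|_{g_{\kil}}$ is negative. Concretely this amounts to showing that a symmetric matrix built from the sums $\sum_k (c_{ij}^k)^2$ over the nice basis of $\sog(n)$ is positive-definite on the hyperplane $\{\sum \dot x_i=0\}$; the combinatorics of which triples $(i,j,k)$ contribute is manageable for $\sog(n)$ but must be carried out with care, since an indefinite block would only yield center-stable convergence and would force a more delicate Lyapunov argument, e.g.\ by showing that a suitable generalized scalar curvature functional $\scalar(g)-\tfrac14|H_0|_g^2$ is a strict local Morse--Bott maximum at $g_{\kil}$ on the unit-volume diagonal slice.
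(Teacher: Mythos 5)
Your overall scheme (restrict to diagonal metrics via the nice basis, write the flow as an ODE system $x'=f(x)$, linearize at $x=(1,\dots,1)$, conclude by hyperbolic stability) is exactly the paper's, but the step you yourself flag as the ``main obstacle'' --- determining the spectrum of $Df$ at the Killing metric --- is left as an expectation rather than proved, and the expectation is wrong in a way that matters. There is \emph{no} zero eigenvalue in the scaling direction: with $H$ frozen at the Cartan form (as it is along the flow, since $H_{\kil}$ is $g$-harmonic for all diagonal $g$), the equation $\ricci(g)=\unc H_g^2$ is not scale-invariant, because $\ricci(\lambda g)=\ricci(g)$ while $H^2_{\lambda g}=\lambda^{-2}H^2_g$. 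Indeed, restricting \eqref{flow so} to the ray $x_1=\dots=x_N=c$ gives $c'=\sum_{i,j}(c_{ij}^k)^2\bigl(-\tfrac12+\tfrac{1}{2c^2}\bigr)$, whose only zero is $c=1$ and whose linearization there is strictly negative. Consequently your detour through a volume-normalized flow is both unnecessary and insufficient as written: convergence of a normalized trajectory only pins down the point on the scaling ray if you separately control the scale, which is precisely the strict stability you have not established.

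The resolution is a direct computation that you did not carry out and that makes the ``combinatorial'' difficulty disappear. Writing $f_k(x)=-\sum_{i,j}(c_{ij}^k)^2+\tfrac12\sum_{i,j}(c_{ij}^k)^2\tfrac{x_i^2+x_j^2-x_k^2+1}{x_ix_j}$ (using niceness to merge the $\ricci$ and $\tfrac12 H^2$ contributions, and that $c_{ij}^k\neq0$ forces $i,j,k$ distinct), one finds
$$\frac{\partial f_k}{\partial x_m}=\tfrac12\sum_{j}(c_{mj}^k)^2\,\frac{x_m^2-x_j^2+x_k^2-1}{x_m^2x_j}\quad(m\neq k),$$
which vanishes identically at $x=(1,\dots,1)$, while $\partial f_k/\partial x_k=-\sum_{i,j}(c_{ij}^k)^2\,x_k/(x_ix_j)$ equals $-\sum_{i,j}(c_{ij}^k)^2<0$ there. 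So $Df(1,\dots,1)$ is already diagonal with strictly negative entries; no positivity of a quadratic form on the hyperplane $\{\sum\dot x_i=0\}$, no Weyl-group block decomposition, and no Lyapunov fallback are needed. Asymptotic stability then follows at once from the standard hyperbolic fixed-point theorem, as in Theorem \ref{stable z11}. Until you replace your Step 3 by this (or an equivalent) verification, the proposal does not constitute a proof.
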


This implies that near the Killing metric of $\SO(n)$, the conjecture given in \cite[Conjecture 4.14]{GrcStr} holds, i.e.,  for any initial condition $(g,H_0)$ close enough to $(g_{\kil},H_0)$, where $g$ is a left-invariant metric on $\SO(n)$ diagonal with respect to $g_{\kil}$ and $H_0$ is the Cartan $3$-form, the generalized Ricci flow exists on $[0,\infty)$ and converges to the Bismut Ricci flat structure $(g_{\kil},H_0)$.

\vs \noindent \textit {Acknowledgements.} I wish to express my deep gratitude to my Ph.D. advisor Dr. Jorge Lauret for his continued guidance and to the ICTP support during my research visit to Trieste.  Also, I would like to thank to Dr. Dami\'an Fern\'andez for very helpful conversations.

\section{Preliminaries}
\subsection{Aligned homogeneous spaces}\label{preli1}

The known results on compact homogeneous spaces $G/K$ differs substantially between the cases of $G$ simple and non-simple.
One potential reason for this could be that the isotropy representation of $G/K$ is rarely  multiplicity-free  when $G$ is non-simple.
The class of homogeneous spaces with the richest third cohomology were studied in \cite{H3}, they are called \textit{aligned} homogeneous spaces due to their special properties concerning the decomposition in irreducibles of $G$ and $K$ and their Killing constants. We will give an overview of the definition and properties when $G$ has only two simple factors ($s=2$ in \cite{BRFs}).

Let $M=G/K$ be a homogeneous space, where $G$ is a compact, connected, semisimple with two simple factors Lie group and $K$ is a connected closed subgroup. We fix the following decomposition for the Lie algebra of $G$ and $K$:
\begin{equation}\label{decss}
\ggo=\ggo_1\oplus\ggo_2, \qquad \kg=\kg_0\oplus\kg_1\oplus\dots\oplus\kg_t,
\end{equation}where $\ggo_i$'s and $\kg_j$'s are simple ideals of $\ggo$ and $\kg$, respectively, and $\kg_0$ is the center of $\kg$. We call $\pi_i:\ggo\rightarrow\ggo_i$ the usual projections and we set $Z_i:=\pi_i(Z)$ for any $Z\in\ggo$, $i=1,2$.  The Killing form of any Lie algebra $\hg$ will always be denoted by $\kil_\hg$.

\begin{definition}\label{alig-def-2}
A homogeneous space $G/K$ as above is said to be {\it aligned} if there exist $c_1,c_2>0$ such that:
\begin{enumerate}[{label=(\roman*)}]
\item The Killing constants, defined by
\begin{equation*}
\kil_{\pi_i(\kg_j)} = a_{ij}\kil_{\ggo_i}|_{\pi_i(\kg_j)\times\pi_i(\kg_j)},
\end{equation*}
satisfy the following alignment property:
$$
(a_{1j},a_{2j}) = \lambda_j(c_1,c_2) \quad\mbox{for some}\quad \lambda_j>0, \qquad\forall j=1,\dots,t.
$$
\item There exist an inner product $\ip$ on $\kg_0$ such that
\begin{equation}\label{al2}
\kil_{\ggo_i}(Z_i,W_i) = -\tfrac{1}{c_i}\langle Z,W\rangle, \qquad\forall Z,W\in\kg_0, \quad i=1,2.
\end{equation}
\item  $\frac{1}{c_1}+\frac{1}{c_2}=1$.
\end{enumerate}
\end{definition}
The ideals $\kg_j$'s are therefore uniformly embedded on each $\ggo_i$ in some sense. From the definition $G/K$ is automatically aligned if $\kg$ is simple or one-dimensional and the following properties hold:
\begin{enumerate}[{label=(\roman*)}]\label{itemm}
\item $\pi_i(\kg)\simeq\kg$ for $i=1,2$.
\item The Killing form of $\kg_j$ is given by
$\kil_{\kg_j}=\lambda_j\kil_{\ggo}|_{\kg_j\times\kg_j}, \qquad\forall j=1,\dots,t.$
\end{enumerate}

From now on, given homogeneous spaces $M_i=G_i/K$, $i=1,2$, such that each $G_i$ is simple and their Killing constants satisfy $\kil_{\kg}=a_i\kil_{\ggo_i}|_{\kg}$ for $i=1,2$, we consider the homogeneous space $M=G_1\times G_2 / \Delta K$, which is an aligned homogeneous space with
\begin{equation}\label{condition}
c_1=\tfrac{a_1+a_2}{a_2}\text{,} \quad c_2=\tfrac{a_1+a_2}{a_1} \quad \text{ and } \quad \lambda:=\lambda_1=\dots=\lambda_t=\tfrac{a_1a_2}{a_1+a_2}.
\end{equation}
If $a_1\leq a_2$, then $ 1 < c_1 \leq 2 \leq c_2$.

Given $G:=G_1\times G_2$, we consider the reductive decomposition $\ggo=\kg\oplus\pg$, which is orthogonal with respect to $g_{\kil}$, the Killing metric of $G$. We fix the $G$-invariant metric on $M$ called \textit{standard} given by
\begin{equation*}
g_{\kil}=(-\Bb_{\ggo_1})|_{\pg\times\pg}+(-\Bb_{\ggo_2})|_{\pg\times\pg},
\end{equation*}
as a background metric. Note that we denote by $g_{\kil}$ both, the bi-invariant metric on the Lie group $G$ and the $G$-invariant metric on $M$.

Consider the $g_{\kil}$-orthogonal Ad($K$)-invariant decomposition $$\pg=\pg_1\oplus\pg_2\oplus\pg_3,$$
where $\pg_i$ is equivalent to the isotropy representation of the homogeneous space $M_i=G_i/\pi_i(K)$ for $i=1,2$ and
\begin{equation}\label{p3}
  \pg_3:=\left\{\bar{Z}=\left(Z_1,-\tfrac{c_2}{c_1}Z_2\right) :  Z\in \kg\right\}
\end{equation}
is equivalent to the adjoint representation $\kg$ (see \cite[Proposition 5.1]{H3}).

In order to use some known results assume the following technical property:
\begin{assumption}\label{assum}
None of the irreducible components of $\pg_1,\pg_2$ is equivalent to any of the simple factors of $\kg$ as $\Ad(K)$-representations and either $\zg(\kg)=0$ or the trivial representation is not contained in any of $\pg_1,\pg_2$ (see \cite[Section 6]{H3} for more details on this assumption).
\end{assumption}

\subsection{ Bismut connection and generalized Ricci flow}\label{preli2}
For further information on the subject of this subsection, we refer to the recent book \cite{GrcStr} and the articles \cite{CrtKrs, Grc, Lee, PdsRff1, PdsRff2, RubTpl, Str1, Str2}.

Given a compact Riemannian manifold $(M,g)$ and a $3$-form $H$ on $M$, we call {\it Bismut} the unique metric connection on $M$ with torsion $T$ such that it satisfies the $3$-covariant tensor
$$
g(T_XY,Z):=H(X,Y,Z), \qquad\forall X,Y,Z\in\chi(M),
$$
is the $3$-form $H$ on $M$.  When it holds, this connection $\nabla^B$ is given by
$$
g(\nabla^B_XY,Z)=g(\nabla^g_XY,Z)+\unm H(X,Y,Z), \qquad\forall X,Y,Z\in\chi(M),
$$
where $\nabla^g$ is the Levi Civita connection of $(M,g)$.

If $H$ is closed then the pair $(g,H)$ is called a {\it generalized metric}. A way to make these structures evolve naturally is provided by the Ricci tensor of the Bismut connection giving rise to the evolution equation \eqref{GRF} called {\it generalized Ricci flow} (see \cite{GrcStr, Lee} and references therein).

The fixed points of this flow are the {\it Bismut Ricci flat} (BRF for short) generalized metrics, also called {\it generalized Einstein metrics}, i.e., $(g,H)$ such that
\begin{equation}
\ricci(g)=\unc H_g^2 \quad\mbox{and}\quad \mbox{$H$ is $g$-harmonic}.
\end{equation}

\subsection{ Bismut Ricci flat metrics on aligned homogeneous spaces}\label{preli3}

We review in this section the homogeneous BRF generalized metrics recently found in \cite{Corr}, which is a new version of \cite{BRFs} including a Corrigendum.

According to \cite{H3} a bi-invariant symmetric bilinear form $Q_0$ on $\ggo$ defined by
$$
Q_0=\Bb_{\ggo_1}-(\tfrac{1}{c_1-1})\Bb_{\ggo_2},
$$
defines a $G$-invariant closed $3$-form on M denoted $H_0$ and given by
{\small{$$H_0(X,Y,Z) := Q_0([X,Y],Z) + Q_0([X,Y]_\kg,Z) - Q_0([X,Z]_\kg,Y) + Q_0([Y,Z]_\kg,X)  \quad  \text{for all\ }  X,Y,Z \in \pg,$$}}
which is $g$-harmonic for any $G$-invariant metric $g=(x_1,x_2,x_3)_{g_{\kil}}$ of the form:
$$g:=x_1g_{\kil}|_{\pg_1 \times \pg_1} + x_2g_{\kil}|_{\pg_2 \times \pg_2}+x_3g_{\kil}|_{\pg_3 \times \pg_3}.$$

Recently in \cite[Theorem A.2]{Corr} it was proved the existence of a $G$-invariant BRF generalized metric on any homogeneous space $M=G/K$ where $G$ has two simple factors and the Assumption \ref{assum} holds. In terms of the standard metric as a background one the result is the following (see \cite[Remark A.3]{Corr}).

\begin{theorem}{\upshape\cite[Theorem A.2]{Corr}}\label{theoLW}
  Let $M=G/K$ be an aligned homogeneous space with $s=2$ such that Assumption \ref{assum} holds.
\begin{enumerate}[{label=(\roman*)}]
\item The $G$-invariant generalized metric $(g_0,H_{0})$ defined by
\begin{align}
g_0:=\left(1,\tfrac{1}{c_1-1},\tfrac{c_1}{c_1-1}\right)_{g_{\kil}} \label{gthm}
\end{align}
is Bismut Ricci flat.
\item This is the only $G$-invariant Bismut Ricci flat generalized metrics on $M=G/K$ up to scaling of the form $(g=(x_1,x_2,x_3)_{g_{\kil}},H_0)$.
\end{enumerate}
\end{theorem}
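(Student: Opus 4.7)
The BRF conditions for a pair $(g,H)$ are $\ricci(g) = \unc H_g^2$ together with $g$-harmonicity of $H$. The latter is granted by the discussion preceding Theorem \ref{theoLW} for every diagonal metric $g=(x_1,x_2,x_3)_{g_{\kil}}$, so only the Ricci equation must be solved within this three-parameter family. The plan is to recast it as a scalar algebraic system, produce the explicit solution $(1,\tfrac{1}{c_1-1},\tfrac{c_1}{c_1-1})$, and show it is unique.

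Step 1 (diagonalization). Both $\ricci(g)$ and $(H_0)_g^2$ are $\Ad(K)$-invariant symmetric $2$-tensors on $\pg$. Assumption \ref{assum}, combined with the description \eqref{p3} of $\pg_3$ as the ``anti-diagonal'' of $\kg$, implies that the irreducible $\Ad(K)$-summands of $\pg_1,\pg_2,\pg_3$ are pairwise inequivalent across different blocks. Schur's lemma then forces both tensors to be scalar on each $\pg_i$. Writing
\begin{equation*}
\ricci(g)|_{\pg_i}=r_i(x_1,x_2,x_3)\,g|_{\pg_i}, \qquad (H_0)_g^2|_{\pg_i}=s_i(x_1,x_2,x_3)\,g|_{\pg_i},
\end{equation*}
the BRF equation reduces to the three scalar equations $r_i = \unc s_i$, $i=1,2,3$.

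Step 2 (computation). The scalars $r_i$ are obtained from the Wang--Ziller type Ricci formula for the reductive decomposition $\ggo=\kg\oplus(\pg_1\oplus\pg_2\oplus\pg_3)$; the relevant structure constants are encoded in \eqref{condition} and \eqref{p3}. For the $s_i$, I would plug the explicit formula for $H_0$ into $(H_0)_g^2(X,X)=g(\iota_X H_0,\iota_X H_0)$ and sort the resulting sum by which of $\pg_1,\pg_2,\pg_3$ each bracket lies in. A key simplification is that $Q_0$ vanishes on $\kg\times\kg$: indeed, on each simple ideal $\kg_j$ the alignment gives $\Bb_{\ggo_i}(Z,W)=\tfrac{1}{\lambda_j c_i}\Bb_{\kg_j}(Z,W)$ for $Z,W\in\kg_j$, whence $Q_0(Z,W)=\bigl(\tfrac{1}{\lambda_j c_1}-\tfrac{c_2}{c_1}\cdot\tfrac{1}{\lambda_j c_2}\bigr)\Bb_{\kg_j}(Z,W)=0$, and analogously on $\kg_0$ using \eqref{al2} and $\tfrac{1}{c_1}+\tfrac{1}{c_2}=1$. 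This collapses the four-term expression for $H_0$ considerably when entries lie in $\pg_3$.

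Step 3 (solving and uniqueness). Since both $r_i$ and $s_i$ are homogeneous of degree $-1$ in $(x_1,x_2,x_3)$, we may normalize $x_1=1$. I would then verify by substitution that $(x_2,x_3)=(\tfrac{1}{c_1-1},\tfrac{c_1}{c_1-1})$ solves the three equations $r_i=\unc s_i$, proving (i). For (ii), after eliminating one variable the remaining polynomial equation in a single unknown should be shown, using $c_1>1$, to admit this as its unique positive root.

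The main obstacle is the explicit evaluation of $(H_0)_g^2|_{\pg_i}$: the four terms defining $H_0$ produce a large number of bracket products involving both $\kg$- and $\pg$-projections, and careful bookkeeping is required to track which structure constants multiply which $x_j$-factor. The cancellation $Q_0|_{\kg}=0$ together with the constraint $\tfrac{1}{c_1}+\tfrac{1}{c_2}=1$ from Definition \ref{alig-def-2}(iii) are what make the final polynomial identities verifiable; without them the expressions for the $s_i$ are unwieldy and the unique root of the resulting system is not transparent.
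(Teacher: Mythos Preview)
This theorem is not proved in the paper; it is quoted from \cite{Corr}. What the paper does record are the ingredients of that proof, namely Propositions~\ref{ricggals2}, \ref{HQ2s} and \ref{HQ2-s2}, which give $\Ricci(g)$ and $(H_0)_g^2$ block by block. Comparing your outline with those formulas reveals a real gap in Step~1.

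Your claim that Schur's lemma makes $\ricci(g)$ and $(H_0)_g^2$ \emph{scalar} on each $\pg_i$ is not justified by Assumption~\ref{assum}. That assumption only prevents irreducible pieces of $\pg_1,\pg_2$ from being isomorphic to simple factors of $\kg\simeq\pg_3$; it says nothing about $\pg_1$ or $\pg_2$ being irreducible, and in general they are not. Inequivalence ``across different blocks'' yields block-diagonality, not scalarness within a block. Indeed, Propositions~\ref{ricggals2}(i),(ii) and \ref{HQ2s} show that on $\pg_1,\pg_2$ both tensors are linear combinations of $I_{\pg_i}$ and the Casimir $\cas_{\chi_i}$, and $\cas_{\chi_i}$ is a scalar only under the extra Einstein hypothesis on $M_i$ introduced \emph{after} Theorem~\ref{theoLW} (see Lemma~\ref{inv_}). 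Consequently the BRF condition is not three scalar equations but, on each of $\pg_1,\pg_2$, two equations (matching the $I$-coefficient and the $\cas$-coefficient separately), plus one scalar equation on $\pg_3$. Your Step~3 as written (``eliminate one variable, get a single polynomial'') does not account for this over-determined structure; the actual argument must check that the $I$-coefficient equations force $x_1^2=y_1^2$, $x_2^2=y_2^2$, and that the $\cas$-coefficient equations together with the $\pg_3$ equation are then consistent and fix $x_3$. Your observation that $Q_0|_{\kg\times\kg}=0$ is correct and useful, but it does not by itself repair the reduction in Step~1.
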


As these metrics are precisely the fixed points of the generalized Ricci flow, we are going to use the ingredients involved in the proof of their main theorem. We consider the homogeneous space $M_i=G_i/K$ for $i=1,2$, and its $\kil_{\ggo_i}$-orthogonal reductive decomposition $\ggo_i=\kg\oplus \pg_i$. For each $i=1,2$ we endowed it with the standard metric, which we denote by $g_{\kil}^i$ (i.e., $g_{\kil}^i=-\kil_{\ggo_i}|_{\pg_i\times\pg_i}$). For what follows, we called $\cas_{\chi_{i}}:\pg_i\rightarrow \pg_i$, the \textit{Casimir operator} of the isotropy representation $$\chi_i:\kg\rightarrow\End(\pg_i)$$
of $M_i$ with respect to $-\kil_{\ggo_i}|_{\kg\times\kg}$, and we fix this notation:
$$
A_3:=-\tfrac{c_2}{c_1}, \qquad B_3:=\tfrac{1}{c_1}+A_3^2\tfrac{1}{c_2}, \qquad
B_4:=\tfrac{1}{c_1}+\tfrac{1}{c_2}=1.
$$

The following proposition is proved in \cite[Proposition 3.2]{BRFs} and give us the formula for the Ricci operator when condition \eqref{condition} holds ($\lambda:=\lambda_1=\dots=\lambda_t$). Note that the added hyphothesis only changes $(iii)$ from the original proposition.
\begin{proposition}{\upshape\cite[Proposition 3.2]{BRFs}}\label{ricggals2}
If $s=2$ and \eqref{condition} holds, then the Ricci operator of the metric $g=(x_1,x_2,x_3)_{g_{\kil}}$ is given as follows:
\begin{enumerate}[{label=(\roman*)}]
\item $\Ricci(g)|_{\pg_1} =  \tfrac{1}{4x_1}I_{\pg_1}
+ \tfrac{1}{2x_1}  \left(1 - \tfrac{x_{3}}{x_1c_1B_{3}}\right) \cas_{\chi_{1}}$.
\item[ ]
\item $\Ricci(g)|_{\pg_2} =  \tfrac{1}{4x_2}I_{\pg_2}
+ \tfrac{1}{2x_2}\left(1 - \tfrac{x_{3}}{x_2c_2B_{3}} A_{3}^2\right) \cas_{\chi_{2}} $.
\item[ ]
\item $\Ricci(g)|_{\pg_3}= rI_{\pg_3}$, where
\begin{align*}
r :=& \tfrac{\lambda}{4x_{3}B_3}\left(\tfrac{2x_1^2-x_{3}^2}{x_1^2}
+\tfrac{(2x_2^2-x_{3}^2)A_3^2}{x_2^2}
-\tfrac{1+A_3}{B_3}\left(\tfrac{1}{c_1}+\tfrac{1}{c_2}A_3^3\right)\right) \\
&+\tfrac{1}{4x_{3}B_3}\left(2\left(\tfrac{1}{c_1}+\tfrac{1}{c_2}A_3^2\right)
-\tfrac{2x_1^2-x_{3}^2}{x_1^2c_1}
- \tfrac{(2x_2^2-x_{3}^2)A_3^2}{x_2^2c_2}\right).
\end{align*}
\item[ ]
\item $g(\Ricci(g)\pg_i,\pg_j)=0 \quad \text{for all } i\neq j$.
\end{enumerate}

\end{proposition}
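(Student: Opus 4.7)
The plan is to adapt the general Wang--Ziller-type Ricci formula for diagonal invariant metrics on a reductive homogeneous space to the aligned two-factor situation at hand, and then identify the emerging structure constants in terms of the aligned invariants $c_1,c_2,A_3,B_3,\lambda$ and the isotropy Casimirs $\cas_{\chi_i}$ of $M_i=G_i/K$. The key structural ingredients are the decomposition $\pg=\pg_1\oplus\pg_2\oplus\pg_3$, the identification of $\pg_3$ with the ``twisted diagonal'' copy of $\kg$ via \eqref{p3}, and the alignment identities $\kil_\kg=a_i\kil_{\ggo_i}|_\kg$ combined with \eqref{condition}.

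Step 1 (bracket bookkeeping and off-diagonal vanishing). Using that $\pi_i|_\kg$ is a Lie-algebra isomorphism one records the inclusions $[\pg_3,\pg_1]\subset\pg_1$, $[\pg_3,\pg_2]\subset\pg_2$, $[\pg_1,\pg_2]=0$, and $[\pg_i,\pg_i]\subset\pg_i\oplus\pg_3$ for $i=1,2$. These, together with the $\Ad(K)$-equivariance of $\Ricci(g)$ and the fact that $\pg_1$ and $\pg_2$ share no irreducible component with $\pg_3\cong\kg$ (Assumption \ref{assum}), force $\Ricci(g)(\pg_i,\pg_j)=0$ for $i\neq j$, which is item (iv). The same disjointness reasoning shows, together with Schur's lemma along the summands $\kg_j$, that $\Ricci(g)|_{\pg_3}$ is block-scalar.

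Step 2 (the $\pg_1$ and $\pg_2$ blocks). On $\pg_1\subset\ggo_1$ only the brackets $[\kg,\pg_1]\subset\pg_1$ and $[\pg_1,\pg_1]\subset\pg_1\oplus\pg_3$ contribute, so the relevant data is essentially that of $M_1=G_1/K$ with the induced metric of weights $x_1,x_3$. The subtle point is how $\pg_3$ is normalized inside $g_{\kil}$: the $g_{\kil}$-square-norm of $\bar Z=(Z_1,A_3 Z_2)$ equals $-\kil_{\ggo_1}(Z_1,Z_1)-A_3^2\kil_{\ggo_2}(Z_2,Z_2)$, which by $\kil_\kg=a_i\kil_{\ggo_i}|_\kg$ and \eqref{condition} collapses to $B_3$ times $-\kil_\ggo(Z,Z)$. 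This is what turns the naive coefficient $x_3/x_1$ into $x_3/(x_1 c_1 B_3)$ in item (i); the Casimir $\cas_{\chi_1}$ then enters in the standard way through the $[\kg,\pg_1]\subset\pg_1$ contribution. The $\pg_2$ block is symmetric under $1\leftrightarrow 2$, and the factor $A_3^2$ in item (ii) reflects the asymmetric definition of $\bar Z$.

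Step 3 (the $\pg_3$ block and the scalar $r$). The uniformity hypothesis $\lambda_1=\dots=\lambda_t=\lambda$ from \eqref{condition} collapses the a priori several $\kg_j$-block scalars to a \emph{single} common eigenvalue $r$. I would compute $r$ by splitting $\Ricci(g)\bar Z$ into three contributions: the intrinsic $[\pg_3,\pg_3]\subset\pg_3$ piece, a bi-invariant-type term proportional to $\lambda/(4x_3 B_3)$ and involving the combination $(1/c_1)+(1/c_2)A_3^3$; and the two mixed pieces coming from $[\pg_i,\pg_i]$-brackets, whose $\pg_3$-components are evaluated by pairing against $\cas_{\chi_i}$ and produce the quadratic-in-$x_i,x_3$ terms. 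Summing these with the correct weights in $x_i,c_i,A_3$ and simplifying with $B_4=1$ yields the two-line expression for $r$ in item (iii). The main obstacle is precisely this final combinatorial bookkeeping: the two simple factors produce four separate contributions to $r$ which must be repackaged so that the $A_3=-c_2/c_1$ and $1/c_1+1/c_2=1$ cancellations occur explicitly; the $\lambda_j$-uniformity is essential here, since without it the stated closed form for $r$ would split into $t$ different scalars along the $\kg_j$ summands.
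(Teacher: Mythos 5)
This proposition is not proved in the paper at all: it is imported verbatim from \cite[Proposition 3.2]{BRFs}, with the only remark that the hypothesis $\lambda_1=\dots=\lambda_t=\lambda$ from \eqref{condition} collapses part (iii) to a single scalar. So the comparison is really against the proof in that reference, and your outline does follow the same route (the Wang--Ziller-type formula for diagonal invariant metrics, specialized via the bracket relations of the aligned decomposition). Your Step 2 observation that $g_{\kil}(\bar Z,\bar Z)=B_3\cdot(-\kil_{\ggo}(Z,Z))$ is the correct mechanism producing the factors $\tfrac{x_3}{x_1c_1B_3}$ and $\tfrac{x_3}{x_2c_2B_3}A_3^2$, and you correctly identify that the uniformity of the $\lambda_j$ is what makes $\Ricci(g)|_{\pg_3}$ scalar rather than merely block-scalar.

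There are, however, two genuine gaps. First, in Step 1 the vanishing $g(\Ricci(g)\pg_1,\pg_2)=0$ cannot be obtained from Schur's lemma plus Assumption \ref{assum}: when $G_1=G_2$ (a case the paper explicitly allows), $\pg_1$ and $\pg_2$ are \emph{equivalent} $\Ad(K)$-representations, so representation-theoretic disjointness only handles the pairs $(\pg_i,\pg_3)$. For $(\pg_1,\pg_2)$ you must argue directly from the structure constants: a cross term $\sum g_{\kil}([X,e_i],e_j)\,g_{\kil}([Y,e_i],e_j)$ with $X\in\pg_1$, $Y\in\pg_2$ forces $e_j$ to lie in $\pg_3$ and then $e_i$ to lie simultaneously in $\pg_1$ and $\pg_2$, which is impossible; this bracket argument needs to be stated, not subsumed under equivariance. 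Second, and more importantly, Step 3 is a description of a computation rather than the computation itself: the entire content of item (iii) is the exact set of coefficients in the two-line formula for $r$ (the weights $\tfrac{\lambda}{4x_3B_3}$, the combination $\tfrac{1+A_3}{B_3}\bigl(\tfrac{1}{c_1}+\tfrac{1}{c_2}A_3^3\bigr)$, and the $\tfrac{2x_i^2-x_3^2}{x_i^2}$ terms), and you explicitly defer the ``combinatorial bookkeeping'' that produces them. As written, the proposal establishes the shape of the answer but not the answer, so it does not yet constitute a proof of the stated formulas.
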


The following results give us the formula of the symmetric bilinear form $(H_0)_g^2$ used in the definition of the generalized Ricci Flow. In general, a bi-invariant symmetric bilinear form  $Q$ on $\ggo$,
$$
Q=y_1\Bb_{\ggo_1}+y_2\Bb_{\ggo_2} \text{ such that } \tfrac{y_1}{c_1}+\tfrac{y_2}{c_2}=0,
$$
defines a $G$-invariant closed $3$-form on M given by
{\small{$$H_Q(X,Y,Z) := Q([X,Y],Z) + Q([X,Y]_\kg,Z) - Q([X,Z]_\kg,Y) + Q([Y,Z]_\kg,X) , \quad  \text{for all\ }  X,Y,Z \in \pg.$$}}
\begin{proposition}{\upshape\cite[Proposition 4.2]{BRFs}}\label{HQ2s}
For any $X\in\pg_k$, $k=1,2$,
$$
(H_Q)_g^2(X,X) = g_{\kil}\left(\left(\left(\tfrac{2S_k}{x_kc_k}-\tfrac{2y_k^2}{x_k^2}\right)\cas_{\chi_k}+\tfrac{y_k^2}{x_k^2}I_{\pg_k}\right)X,X\right),
$$
where
$$
C_3=\tfrac{y_1}{c_1}+A_3\tfrac{y_2}{c_2},\qquad
S_1= \tfrac{1}{x_3B_3}\left(y_1 + \tfrac{C_3}{B_4}\right)^2, \qquad
S_2 = \tfrac{1}{x_3B_3}\left(A_3y_2+ \tfrac{C_3}{B_4}\right)^2.
$$
\end{proposition}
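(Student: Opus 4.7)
The plan is to compute $(H_Q)_g^2(X,X) = \sum_{\alpha,\beta} H_Q(X, E_\alpha, E_\beta)^2$ directly, where $\{E_\alpha\}$ is a $g$-orthonormal basis of $\pg$ adapted to the decomposition $\pg = \pg_1 \oplus \pg_2 \oplus \pg_3$. Writing $E_\alpha = \tilde E_\alpha / \sqrt{x_{i_\alpha}}$ for a $g_{\kil}$-orthonormal basis with $\tilde E_\alpha \in \pg_{i_\alpha}$, each summand picks up a factor $1/(x_{i_\alpha} x_{i_\beta})$, and since $Q|_{\ggo_i} = y_i \Bb_{\ggo_i}$ with $[\ggo_1,\ggo_2]=0$, many cross-terms vanish automatically.

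The central step is to expand the four summands of $H_Q$ for $X \in \pg_k$ according to which $\pg_i$ contains $\tilde E_\alpha$ and $\tilde E_\beta$. Using \eqref{p3} to describe $\pg_3$ as the skew-diagonal $\{(Z_1, -(c_2/c_1) Z_2) : Z \in \kg\}$, the projection $[\cdot,\cdot]_\kg$ onto $\kg$ can be computed explicitly through the $\pi_i$. Pairing such projections against $Q$ produces the scalar $C_3 = y_1/c_1 + A_3 y_2/c_2$, while normalizing $\pg_3$-elements against $g_{\kil}$ introduces the factor $1/(x_3 B_3)$ and the constant $B_4 = 1$. After squaring, the sums over the basis collapse via the defining relation of the Casimir operator $\cas_{\chi_k}$ of the isotropy representation of $M_k = G_k/K$; each block contributes either a multiple of $\cas_{\chi_k}(X)$ or of $X$, with scalar coefficients dictated by the alignment data.

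The main obstacle will be the bookkeeping: four summands of $H_Q$ with two independent basis indices ranging over three subspaces, plus the constants $y_1, y_2, c_1, c_2, A_3, B_3, B_4, C_3$ to track. I would use the closedness condition $y_1/c_1 + y_2/c_2 = 0$ early to eliminate redundant terms, and rely on $\Ad(K)$-equivariance of $\cas_{\chi_k}$ to guarantee that only Casimir-type invariants and identity terms survive. Once all $1/(x_3 B_3)$ factors from $\pg_3$-contributions are squared and combined with $C_3$, they assemble into the scalars $S_k$ of the statement, and the remaining $\pg_k$-only terms contribute the $y_k^2/x_k^2$ identity summand; comparing coefficients yields the claimed formula.
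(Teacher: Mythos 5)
This proposition is quoted by the paper from \cite[Proposition 4.2]{BRFs} without proof, and your outline is essentially the argument of that reference: expand $(H_Q)_g^2(X,X)$ over a $g$-orthonormal basis adapted to $\pg=\pg_1\oplus\pg_2\oplus\pg_3$, use $[\ggo_1,\ggo_2]=0$ and the skew-diagonal description \eqref{p3} of $\pg_3$ (whose $g_{\kil}$-normalization produces the $\tfrac{1}{x_3B_3}$ factor and, via the $\kg$-projections paired against $Q$, the constant $C_3$), and collapse the remaining sums into $\cas_{\chi_k}$ and identity terms. The approach is correct and matches the source; the only caveat is that it is a computational plan rather than a verification of the precise coefficients $S_k$.
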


\begin{proposition}{\upshape\cite[Proposition A.1]{Corr}}\label{HQ2-s2}
If $\bar{Z}\in\pg_3$ (see \eqref{p3}), with $g_{\kil}(\bar{Z},\bar{Z})=1$, then
\begin{align*}
(H_Q)_{g}^2(\bar{Z},\bar{Z})
= & \tfrac{1}{x_1^2B_3} \left(y_1 + \tfrac{C_3}{B_{4}} \right)^2 \tfrac{1-c_1\lambda}{c_1}
 +\tfrac{1}{x_2^2B_3} \left(y_2A_3 + \tfrac{C_3}{B_{4}}\right)^2 \tfrac{1-c_2\lambda}{c_2} \\
& +\tfrac{\lambda}{x_3^2B_3^3}\left(\tfrac{y_1}{c_1}+A_3^3\tfrac{y_2}{c_2} + \tfrac{3C_3}{B_{4}} \left(\tfrac{1}{c_1}+A_3^2\tfrac{1}{c_2}\right)\right)^2.
\end{align*}
\end{proposition}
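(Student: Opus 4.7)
The plan is to compute $(H_Q)^2_g(\bar Z, \bar Z)$ directly from the definition $H^2_g(X,Y) = g(\iota_X H, \iota_Y H)$ by expanding in a $g$-orthonormal basis and tracking which bracket contributions survive. First I would fix a $g_{\kil}$-orthonormal basis $\{e^{(i)}_\alpha\}$ of each $\pg_i$; then $\{x_i^{-1/2} e^{(i)}_\alpha\}$ is $g$-orthonormal on $\pg$, and
\[
(H_Q)^2_g(\bar Z, \bar Z) \;=\; \sum_{i,j}\tfrac{1}{x_i x_j}\sum_{\alpha,\beta} H_Q(\bar Z, e^{(i)}_\alpha, e^{(j)}_\beta)^2
\]
(up to the convention relating the $g$-inner product on $\Lambda^2 T^*M$ to that on $TM$). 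The three prefactors $1/x_1^2,\, 1/x_2^2,\, 1/x_3^2$ in the target formula suggest that only the diagonal blocks $(i,j)=(1,1),(2,2),(3,3)$ contribute, so the first substantive task is to verify vanishing of the off-diagonal blocks.

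That vanishing should follow from two structural facts: $[\ggo_1,\ggo_2]=0$, and for $X \in \pg_i$ with $i=1,2$, the bracket $[\bar Z, X]$ reduces to $[Z_i^{sc}, X]$ (with $Z_1^{sc}=Z_1,\ Z_2^{sc}=-\tfrac{c_2}{c_1}Z_2$), which lies in $\pg_i$ by $\Ad(K)$-invariance, so $[\bar Z, X]_\kg = 0$. These kill the $(1,2)$ block directly, and for the $(i,3)$ blocks with $i=1,2$ each of the four $Q$-summands in $H_Q(\bar Z, X, \bar W)$ vanishes separately by the $g_{\kil}$-orthogonality of $\pg_i,\,\kg$, and $\pg_3$ combined with $[X, \bar W] \in \pg_i$.

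For the $(1,1)$ block, the expansion of $H_Q(\bar Z, X, Y)$ collapses to $y_1\,\kil_{\ggo_1}([Z_1, X], Y) + Q([X, Y]_\kg, \bar Z)$. Using $Q = y_1 \kil_{\ggo_1} + y_2 \kil_{\ggo_2}$, the alignment relation $\kil_{\ggo_i}|_\kg = \tfrac{1}{\lambda c_i}\kil_\kg$ (via $\pi_i$), and the constraint $y_1/c_1 + y_2/c_2 = 0$, these two terms combine into $(y_1 + C_3/B_4)\,\kil_{\ggo_1}([Z_1, X], Y)$. Squaring and summing over $\alpha,\beta$ yields $(y_1+C_3/B_4)^2\,\tr_{\pg_1}(\ad(Z_1)^2)$, and the standard Casimir identity gives $\tr_{\pg_1}(\ad(Z_1)^2) = (1-a_1)\kil_{\ggo_1}(Z_1,Z_1)$; after imposing $g_{\kil}(\bar Z,\bar Z)=1$ this fixes $\kil_{\ggo_1}(Z_1,Z_1)$ in terms of $B_3 = c_2/c_1$ and produces the prefactor $(1-c_1\lambda)/(c_1 B_3)$. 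The $(2,2)$ block is handled symmetrically with $y_1$ replaced by $y_2 A_3$.

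The hard step will be the $(3,3)$ block, where a direct computation using $c_1+c_2=c_1c_2$ yields the decomposition $[\bar Y, \bar W] = \tfrac{c_2}{c_1}[Y,W] + \tfrac{c_1-c_2}{c_1}\,\overline{[Y,W]}$ with nontrivial components in both $\kg$ and $\pg_3$. All four $Q$-terms in $H_Q(\bar Z, \bar Y, \bar W)$ then contribute, and after using bi-invariance and cyclicity of $\kil_\kg$ they combine into a single scalar times $\kil_\kg([Z,Y],W)$; the target formula writes this scalar in the structured form $\tfrac{y_1}{c_1} + A_3^3 \tfrac{y_2}{c_2} + \tfrac{3 C_3}{B_4}(\tfrac{1}{c_1}+A_3^2\tfrac{1}{c_2})$, which after using $y_2=-\tfrac{c_2}{c_1}y_1$ and the binomial identity $(c_1+c_2)^3=(c_1c_2)^3$ collapses to the compact quantity whose square gives the claimed coefficient. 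Summing squared entries over $\pg_3 \simeq \kg$ then produces the outer $\lambda/B_3^3$ factor via a $\kg$-Casimir computation combined with the normalization. The main obstacle is keeping track of the bracket decomposition of $[\pg_3,\pg_3]$ alongside the polynomial identity in $c_1,c_2$ that reassembles the four $Q$-contributions into precisely the target scalar; I would sanity-check the whole computation on a simple example such as $G_1=G_2=\SU(2)$ with $K=U(1)$ before attacking the general alignment case.
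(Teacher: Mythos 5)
The paper offers no proof of this statement---it is quoted verbatim from \cite[Proposition A.1]{Corr}---so there is no in-text argument to compare against; your direct orthonormal-basis computation is the expected derivation and it checks out. In particular, I verified that the off-diagonal blocks vanish for exactly the reasons you give, that the $(1,1)$ block collapses to $\left(y_1+\tfrac{C_3}{B_4}\right)\kil_{\ggo_1}([Z_1,X],Y)$ and reproduces the first term via $\tr_{\pg_1}\ad(Z_1)^2=(1-c_1\lambda)\kil_{\ggo_1}(Z_1,Z_1)$ together with the normalization $-\kil_{\ggo_1}(Z_1,Z_1)=\tfrac{1}{c_1B_3}$, and that your decomposition $[\bar Y,\bar W]=\tfrac{c_2}{c_1}[Y,W]+\tfrac{c_1-c_2}{c_1}\overline{[Y,W]}$ makes the four $Q$-summands on the $\pg_3$-block add up to $\tfrac{1}{\lambda}\bigl(\tfrac{y_1}{c_1}+A_3^3\tfrac{y_2}{c_2}+3B_3C_3\bigr)\kil_{\kg}([Z,Y],W)$, which with $-\kil_{\kg}(Z,Z)=\lambda/B_3$ gives the third term exactly (this also settles your convention caveat: the formula corresponds to the full double sum $\sum_{a,b}H(\cdot,f_a,f_b)H(\cdot,f_a,f_b)$ over a $g$-orthonormal basis, with no factor $\tfrac12$).
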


\section{Generalized Ricci flow on aligned spaces}

The aim of this paper is to study the generalized Ricci flow on homogeneous spaces of the form $M=G_1\times G_2 / K$, such that each $G_i$ is a simple Lie group, $K$ is a closed subgroup of them and their Killing constants satisfy $\kil_{\kg}=a_i\kil_{\ggo_i}|_{\kg}$ for $i=1,2$. As in Section \ref{preli1}, $M$ is an aligned homogeneous space satisfying \eqref{condition}, note that $a_i=\lambda c_i$, for $i=1,2$. We also assume that each homogeneous space $M_i:=G_i/K$ has its standard metric $g_{\kil}^i$ Einstein for $i=1,2$. In that sense we look for invariant solutions $(g(t),H(t))$ to the equations given in \eqref{GRF}.

We set $x_1:=x_1(t)$, $x_2:=x_2(t)$, $x_3:=x_3(t)$ smooth positive functions and define
\begin{equation*}\label{def g}
 g(t):=x_1g_{\kil}|_{\pg_1 \times \pg_1} + x_2g_{\kil}|_{\pg_2 \times \pg_2}+x_3g_{\kil}|_{\pg_3 \times \pg_3}.
 \end{equation*}

As every scalar multiple of $H_0$ is $g(t)$-harmonic for every $t\geq0$, the second equation of the flow in \eqref{GRF} vanishes and $H(t)\equiv H_0$, therefore we only need to work with the first one, starting from some generalized metric of the form $(g(0),H_0)$, that is
\begin{equation}\label{GRF1}
\tfrac{\partial}{\partial t}g(t)=-2\Ricci(g(t))+\tfrac{1}{2}(H_0)^2_{g(t)}.
\end{equation}

Note that, as we define it, $g(t)$ is a diagonal metric for all $t \geq 0$. In the next lemma we see that, under certain conditions, the set of diagonal metrics is invariant under the flow.

\begin{lemma}\label{inv_}
Let $M=G_1\times G_2 / K$ be an aligned homogeneous space satisfying \eqref{condition} and assume that $(M_i =G_i/K, g_{\kil}^i)$ is  Einstein, where $g_{\kil}^i$ is the standard metric on $M_i$, then the set of diagonal metrics with respect to the decomposition $\pg=\pg_1\oplus\pg_2\oplus\pg_3$ is invariant under the generalized Ricci flow.
\end{lemma}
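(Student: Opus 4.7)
The plan is to show that the right-hand side
$F(g):=-2\Ricci(g)+\tfrac{1}{2}(H_0)^2_{g}$
of the reduced flow \eqref{GRF1} is itself a diagonal symmetric $2$-tensor whenever $g$ is diagonal; this is precisely the condition that the $3$-dimensional submanifold of diagonal $G$-invariant metrics be tangent to the flow, hence invariant. First, since $H_0$ is $g$-harmonic for every diagonal $g$ (by the discussion in Subsection \ref{preli3}), the second equation of \eqref{GRF} is automatically satisfied by $H(t)\equiv H_0$, so only the $g$-equation has to be analyzed.

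For the Ricci summand, block-diagonality with respect to $\pg=\pg_1\oplus\pg_2\oplus\pg_3$ comes for free from Proposition \ref{ricggals2}(iv), and the scalar-on-$\pg_3$ property is Proposition \ref{ricggals2}(iii). On $\pg_i$ for $i=1,2$, Proposition \ref{ricggals2}(i)-(ii) expresses $\Ricci(g)|_{\pg_i}$ as a linear combination of $I_{\pg_i}$ and $\cas_{\chi_i}$. The Einstein hypothesis enters precisely here: specializing the same formula to the standard metric on $M_i=G_i/K$ gives $\Ricci(g_{\kil}^i)=\tfrac{1}{4}I_{\pg_i}+\tfrac{1}{2}\cas_{\chi_i}$, and this being a scalar multiple of $I_{\pg_i}$ forces $\cas_{\chi_i}=\kappa_i I_{\pg_i}$ for a constant $\kappa_i$. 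Consequently $\Ricci(g)|_{\pg_i}$ is a scalar multiple of $I_{\pg_i}$ for each $i$.

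For the $(H_0)^2_{g}$ summand, Propositions \ref{HQ2s} and \ref{HQ2-s2} give the diagonal blocks explicitly: on $\pg_i$, $i=1,2$, the formula is again of the form $\alpha' I_{\pg_i}+\beta' \cas_{\chi_i}$, hence scalar by the same Einstein-induced relation $\cas_{\chi_i}=\kappa_i I_{\pg_i}$; and on $\pg_3$ Proposition \ref{HQ2-s2} yields the same value for every unit $\bar{Z}\in\pg_3$, so the restriction is also scalar. What remains, and what I expect to be the main technical obstacle, is the vanishing of the cross-terms $(H_0)^2_{g}(X,Y)$ for $X\in\pg_i$, $Y\in\pg_j$ with $i\neq j$. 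I would establish this by an $\Ad(K)$ Schur-type argument together with the key structural fact $[\ggo_1,\ggo_2]=0$: since $\pg_1\subset\ggo_1$, $\pg_2\subset\ggo_2$, many of the bracket terms in the expression $H_0(X,U,V)=Q_0([X,U],V)+Q_0([X,U]_\kg,V)-Q_0([X,V]_\kg,U)+Q_0([U,V]_\kg,X)$ collapse, and the surviving $\Ad(K)$-equivariant pairing between $\pg_i$ and $\pg_j$ is forced to be zero by the representation-theoretic incompatibility of these summands (in particular, no irreducible component of $\pg_1,\pg_2$ is equivalent to one of $\pg_3\simeq\kg$). Combining the three pieces, $F(g)$ is block-diagonal and scalar on each block, hence diagonal, and the lemma follows.
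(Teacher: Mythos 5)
Your proposal is correct and follows essentially the same route as the paper: the Einstein hypothesis forces $\cas_{\chi_i}=\kappa_i I_{\pg_i}$, and then Propositions \ref{ricggals2}, \ref{HQ2s} and \ref{HQ2-s2} show that both $\Ricci(g)$ and $(H_0)^2_g$ are multiples of the identity on each $\pg_j$, so the right-hand side of \eqref{GRF1} is tangent to the space of diagonal metrics. The only difference is that you explicitly flag the vanishing of the off-diagonal blocks of $(H_0)^2_g$, which the paper leaves implicit in the cited propositions; your sketch via $[\ggo_1,\ggo_2]=0$ together with $\Ad(K)$-equivariance under Assumption \ref{assum} is the right way to justify that point.
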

\begin{proof}
 As $(M_i, g_{\kil}^i)$ is Einstein, there exist constants $ \kappa_i \in (0,\tfrac{1}{2}]$ such that $\cas_{\chi_{i}} = \kappa_i \id_{\pg_i}$ for $i=1,2$. Therefore, from Propositions \ref{ricggals2}, \ref{HQ2s} and \ref{HQ2-s2}, we see that $\tfrac{d}{dt}g(t)$ is tangent to the space of diagonal metrics, because all the operators involved are multiples of the identity in each $\pg_j$ for $j=1,2,3$.
\end{proof}

\begin{proposition}\label{floww}
Let $M=G_1\times G_2/ K$ be as in Lemma \ref{inv_} such that $\cas_{\chi_{i}} = \kappa_i \id_{\pg_i}$ for $i=1,2$. Fix the standard metric of $M$, $g_{\kil}$, as a background metric, then the generalized Ricci flow for metrics of the form $g=(x_1,x_2,x_3)_{g_{\kil}}$ is given by the following system of ODE's:
\begin{align}\label{flow z11}
\left\{\begin{aligned}
  x_1'(t)= &\tfrac{2 \kappa_1 x_1 x_3^2 + c_1 x_3 \left(-1 + x_1^2 + 2 \kappa_1 (1 + x_1^2 - 2 x_1 x_3)\right) + c_1^2 \left(x_3 - x_1^2 x_3 - 2 \kappa_1 \left(x_3 + x_1^2 x_3 - x_1 (1 + x_3^2)\right)\right)}{2 (c_1-1) c_1 x_1^2 x_3},\\
  x_2'(t)= &-\tfrac{c_1^3 (1 + 2 \kappa_2) x_2^2 x_3 - 2 \kappa_2 x_2 x_3^2 +
 c_1 x_3 \left(-1 + x_2^2 + 2 \kappa_2 (1 + x_2^2 + 2 x_2 x_3)\right) -
 2 c_1^2 x_2 \left(x_2 x_3 + \kappa_2 (1 + 2 x_2 x_3 + x_3^2)\right)}{2 (c_1-1)^2 c_1 x_2^2 x_3},\\
  x_3'(t)=&\tfrac{\left(x_3^2 - 2 c_1 x_3^2 + c_1^2 (x_3^2-1)\right)\left(-c_1^2 (1 + 2\lambda) x_2^2 x_3^2 + (x_1^2 - x_2^2) x_3^2 + c_1 \left((\lambda-1) x_1^2 + (2 +\lambda) x_2^2\right) x_3^2 + c_1^3\lambda x_2^2 (x_3^2-x_1^2)\right)}{2 (c_1-1)^3 c_1 x_1^2 x_2^2 x_3^2}.\\
\end{aligned} \right.
\end{align}
\end{proposition}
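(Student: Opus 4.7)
By Lemma \ref{inv_}, the flow preserves the class of diagonal metrics, so writing $g(t)=(x_1(t),x_2(t),x_3(t))_{g_{\kil}}$ we have $\tfrac{\partial}{\partial t}g|_{\pg_i}=x_i'(t)\,g_{\kil}|_{\pg_i}$. Since every term on the right of \eqref{GRF1} restricted to $\pg_i$ is a scalar multiple of $g_{\kil}|_{\pg_i}$ (this is where the Einstein assumption $\cas_{\chi_i}=\kappa_i\id_{\pg_i}$ is used), the PDE \eqref{GRF1} reduces to a scalar ODE on each summand. The strategy is simply to read off $\Ricci(g)|_{\pg_i}$ from Proposition \ref{ricggals2} and $(H_0)_g^2|_{\pg_i}$ from Propositions \ref{HQ2s} and \ref{HQ2-s2}, substitute the coefficients of $H_0$, translate $\Ric$ to $\ricci$ through $g$, and simplify.

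The plan is to first specialize the formulas for $H_Q$ to $Q=Q_0=\kil_{\ggo_1}-\tfrac{1}{c_1-1}\kil_{\ggo_2}$, i.e. $y_1=1$, $y_2=-\tfrac{1}{c_1-1}$. Using $c_2=\tfrac{c_1}{c_1-1}$, $A_3=-\tfrac{c_2}{c_1}$, $B_4=1$, one computes
\[
B_3=\tfrac{1}{c_1}+\tfrac{c_2}{c_1^2}=\tfrac{1}{c_1-1},\qquad C_3=\tfrac{y_1}{c_1}+A_3\tfrac{y_2}{c_2}=\tfrac{1}{c_1-1},
\]
and then
\[
S_1=\tfrac{c_1^2}{(c_1-1)x_3},\qquad S_2=\tfrac{c_1^2}{(c_1-1)^3 x_3}.
\]
With these values, Propositions \ref{HQ2s} and \ref{HQ2-s2} give $(H_0)_g^2|_{\pg_i}$ as an explicit scalar multiple of $g_{\kil}|_{\pg_i}$.

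For $i=1,2$ the flow equation on $\pg_i$ reads
\[
x_i'=-2x_i\,r_i+\tfrac{1}{2}\,\sigma_i,
\]
where $r_i$ is the Ricci eigenvalue from Proposition \ref{ricggals2} (after setting $\cas_{\chi_i}=\kappa_i\id$) and $\sigma_i$ is the corresponding coefficient of $(H_0)_g^2|_{\pg_i}$. The $\pg_1$ equation is then obtained by multiplying through by $2(c_1-1)c_1\,x_1^2\,x_3$ and grouping terms by powers of $c_1$; I would verify that the $c_1^0$, $c_1^1$ and $c_1^2$ coefficients match the three-block numerator of $x_1'$ in the statement. The $\pg_2$ equation is identical in structure, with the substitutions $\kappa_1\leftrightarrow\kappa_2$ and a sign twist coming from $A_3$ (which produces the extra factor $c_1^3$ and the sign patterns that distinguish $x_2'$ from $x_1'$).

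The $\pg_3$ equation is the heaviest step and the only real obstacle. For a unit $\bar{Z}\in\pg_3$ one has $\tfrac{\partial}{\partial t}g(\bar Z,\bar Z)=x_3'$, while Proposition \ref{HQ2-s2} contributes three fractions with denominators $x_1^2$, $x_2^2$ and $x_3^2$; combined with the Ricci scalar $r$ from Proposition \ref{ricggals2}(iii), all terms must be brought over the common denominator $2(c_1-1)^3 c_1\,x_1^2 x_2^2 x_3^2$ and then simplified using $c_2=\tfrac{c_1}{c_1-1}$, $a_i=\lambda c_i$, and $\tfrac{1}{c_1}+\tfrac{1}{c_2}=1$. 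I expect the simplification to produce the striking factorization
\[
x_3^2-2c_1 x_3^2+c_1^2(x_3^2-1)=(c_1-1)^2 x_3^2-c_1^2,
\]
which vanishes exactly at $x_3=\tfrac{c_1}{c_1-1}$, i.e. at the $\pg_3$-component of the BRF metric $g_0$. Pulling this factor out of the numerator is the main bookkeeping challenge; once it is extracted, the remaining bracket is a quadratic in $x_3^2$ with coefficients polynomial in $x_1^2$, $x_2^2$, $c_1$ and $\lambda$, and matches the second factor in the stated $x_3'$ equation term by term.
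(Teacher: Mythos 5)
Your proposal is correct and follows essentially the same route as the paper: substitute $y_1=1$, $y_2=-\tfrac{1}{c_1-1}$ into Propositions \ref{HQ2s} and \ref{HQ2-s2} (obtaining the same values $B_3=C_3=\tfrac{1}{c_1-1}$, $S_1=\tfrac{c_1^2}{(c_1-1)x_3}$, $S_2=\tfrac{c_1^2}{(c_1-1)^3x_3}$), convert the Ricci operator of Proposition \ref{ricggals2} to the Ricci tensor via $g(\Ricci(g)\cdot,\cdot)|_{\pg_j}=x_j\,g_{\kil}(\Ricci(g)\cdot,\cdot)|_{\pg_j}$, and plug into \eqref{GRF1} componentwise. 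The remaining work is the same routine algebraic verification the paper also leaves implicit.
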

\begin{proof}
From Definition \ref{alig-def-2} $(iii)$, $c_2=\tfrac{c_1}{c_1-1}$. Hence, as in \cite[Section 5]{BRFs}, we have that
$A_3=-\tfrac{1}{c_1-1}\text{ and } B_3=\tfrac{1}{c_1-1}$. Note that for $j=1,2,3$,
$$g(\Ricci(g)\pg_j,\pg_j)=x_jg_{\kil}(\Ricci(g)\pg_j,\pg_j),$$
and $\Ricci(g)|_{\pg_j}$ is a multiple of the identity in $\pg_j$ given by Proposition \ref{ricggals2}.

Given $Q_0$ with $y_1=1$ and $y_2=-\tfrac{1}{c_1-1}$, the formula of $(H_0)^2_{g}$ follows from Propositions \ref{HQ2s} and \ref{HQ2-s2} with
$$ C_3=\tfrac{1}{c_1-1}, \quad S_1=\tfrac{c_1^2}{(c_1-1)x_3}, \text{ and } \ S_2=\tfrac{c_1^2}{(c_1-1)^3x_3}. $$
Therefore, replacing these on \eqref{GRF1} the proposition holds.
\end{proof}

Note that we can write these equations using the usual notation of nonlinear systems of differential equations
\begin{equation} \label{diff}
x'(t)=f(x),
\end{equation}
where $x:=(x_1,x_2,x_3)$ and $f: \RR^3_{>0}\rightarrow \RR^3$ is given by the equations in \eqref{flow z11}.

According to Theorem \ref{theoLW} we know that $(g_0,H_0)$ defined by \eqref{gthm} is a BRF generalized metric, which means a fix or an equilibrium point of the flow. If we set $x_0:=(1,\tfrac{1}{c_1-1},\tfrac{c_1}{c_1-1})$ then $f(x_0)=0$ and the local behaviour of \eqref{flow z11} is qualitatively determined by the behaviour of the linear system $x'=Ax$ near the origen, where $A=Df(x_0)$, the derivative of $f$ at $x_0$.

To begin with the study of generalized Ricci flow on this class of aligned homogeneous spaces we define some invariant subspaces and show some plots. Regardless of whether $K$ is abelian $(\lambda=0)$ or not, we see that:
\begin{itemize}
  \item  The plane given by $x_3=\tfrac{c_1}{c_1-1}$ is invariant by the flow.
  \item  Inside the plane, the lines defined by $x_1=1$ and $x_2=\tfrac{1}{c_1-1}$ are invariant, and
  \item when $\kappa_1=\kappa_2$, the plane $x_1=(c_1-1)x_2$ and the line in it given by $x_3=\tfrac{c_1}{c_1-1}$ are also invariant.
\end{itemize}

Just to ilustrate the flow in those invariant planes we consider $M=\SU(7)\times\SO(8)/\SO(7)$, a homogeneous space of dimension $55$ such that $c_1=\tfrac{10}{7}$, $\kappa_1=\kappa_2=\tfrac{1}{2}$ and $\lambda=\tfrac{1}{4}$. In this case, the BRF metric is $x_0=(1,\tfrac{7}{3},\tfrac{10}{3})$ and the plots of the invariant planes of the flow are the following.

\break
\begin{paracol}{2}
\begin{figure}[h!]
  \centering
  \includegraphics[width=6cm]{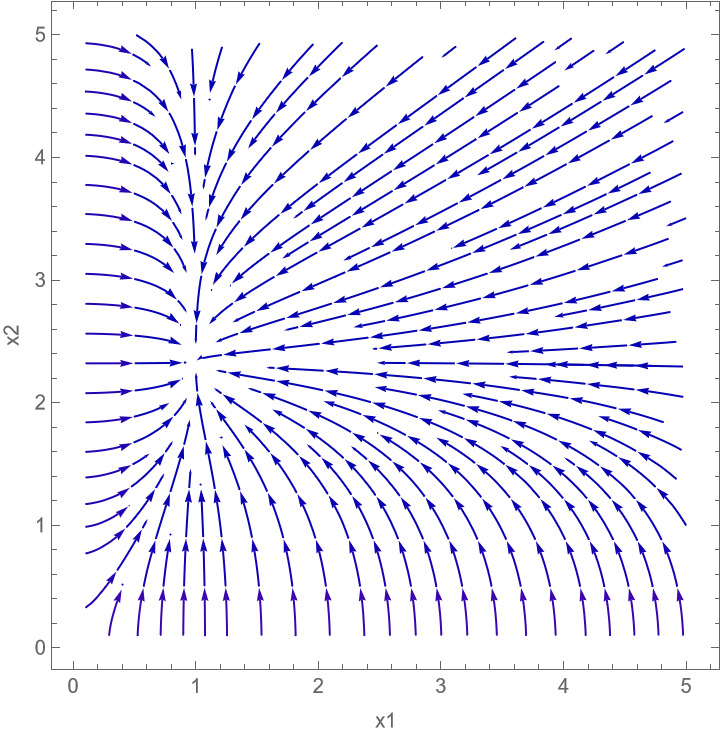}
  \caption{Flow in the invariant plane $x_3=\tfrac{10}{3}$.}
\end{figure}
\switchcolumn
  \begin{figure}[h!]
  \centering
  \includegraphics[width=6cm]{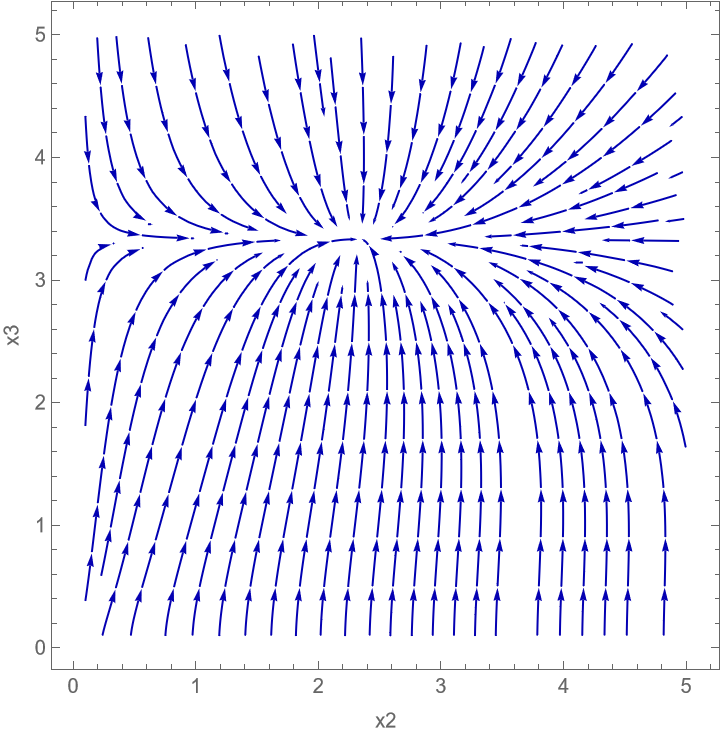}
  \caption{Flow in the invariant plane $x_1=\tfrac{3}{7}x_2$.}
\end{figure}
\end{paracol}

For the remainder of this section, we recall some definitions of the local theory of nonlinear systems. Consider the system given in \eqref{diff},
\begin{definition}
An equilibrium point $x_0$ (i.e., $f(x_0)=0$) is called \textit{hyperbolic} if none of the eigenvalues of the matrix $Df(x_0)$ have zero real part.
\end{definition}
\begin{definition}
  Let $\phi_t$ denote the flow of the differential equation in \eqref{diff} defined for all $t\in \RR$. An equilibrium point $x_0$ is \textit{stable} if for all $\varepsilon>0$ there exists a $\delta>0$ such that for all $x\in N_{\delta}(x_0)$ and $t\geq 0$ we have
  $$ \phi_t(x) \in N_{\varepsilon}(x_0),$$
  where $N_{\alpha}(x_0)$ is the open ball of positive radius $\alpha$ centered at $x_0$.

  $x_0$ is \textit{asymptotically stable} if it is stable and there exists  $\delta>0$ such that for all $x \in N_{\delta}(x_0)$ we have
  $$\lim_{t\rightarrow \infty}\phi_t(x)=x_0.$$
\end{definition}

Because the stability of an equilibrium point is a local property it is reasonable to expect that it will be the same as the stability at the origin of the linear system $x'(t) = Df(x_0)x$. This expectation is not always realized, but it is for hyperbolic equilibrium points.

The next result gives us a better understanding of the local behaviour of the flow near the BRF generalized metric.

\begin{theorem}\label{stable z11}
  The metric $g_0=(1,\tfrac{1}{c_1-1},\tfrac{c_1}{c_1-1})_{g_{\kil}}$ is asymptotically stable for the dynamical system \eqref{flow z11}.
\end{theorem}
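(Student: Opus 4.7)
The plan is to apply Lyapunov's linearized-stability theorem: at a hyperbolic equilibrium of an autonomous ODE, if every eigenvalue of the Jacobian $A:=Df(x_0)$ has strictly negative real part, then $x_0$ is asymptotically stable. So the task reduces to computing $A$ at $x_0=(1,\tfrac{1}{c_1-1},\tfrac{c_1}{c_1-1})$ and checking the sign of its spectrum.

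Before differentiating, I would exploit a structural observation that renders $A$ upper triangular. Inspecting \eqref{flow z11}, the right-hand side of $x_1'$ depends only on $(x_1,x_3)$ and that of $x_2'$ depends only on $(x_2,x_3)$, so $\partial x_1'/\partial x_2\equiv\partial x_2'/\partial x_1\equiv 0$. Moreover, the numerator of $x_3'$ factors as $G(x_3)\,F(x_1,x_2,x_3)$, where $G(x_3):=(c_1-1)^2 x_3^2-c_1^2$ depends only on $x_3$ and vanishes at $x_{3,0}=c_1/(c_1-1)$; hence $\partial x_3'/\partial x_1=\partial x_3'/\partial x_2=0$ at $x_0$. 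Consequently the eigenvalues of $A$ are precisely its diagonal entries $a_{11}, a_{22}, a_{33}$.

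To compute these I would use that each component $f_i$ vanishes at $x_0$, so when writing $f_i = N_i/D_i$ the derivative of the denominator drops out and $a_{ii}=(\partial N_i/\partial x_i)(x_0)/D_i(x_0)$. For $a_{33}$, the additional vanishing $G(x_0)=0$ collapses the derivative to $G'(x_{3,0})\,F(x_0)/D(x_0)$, which a short calculation reduces to $c_1(\lambda-1)$. Since $c_1>1$ and $\lambda=\tfrac{a_1 a_2}{a_1+a_2}\le \tfrac{a_2}{2}<\tfrac{1}{2}$ under our hypotheses, $a_{33}<0$. The remaining two entries should simplify to $a_{11}=-1$ and $a_{22}=-(c_1-1)$, both strictly negative on $c_1\in(1,2]$. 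The Hartman--Grobman theorem (or a direct Lyapunov-function argument) then yields asymptotic stability.

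The step I expect to be most delicate is the algebraic simplification for $a_{22}$: the numerator of $x_2'$ is a polynomial in $(x_2,x_3,c_1,\kappa_2)$ with many terms, and after substituting $x_2=1/(c_1-1)$, $x_3=c_1/(c_1-1)$ one must verify that all $\kappa_2$-dependent contributions cancel, leaving a quantity independent of the Einstein constant $\kappa_2$. A similar but milder cancellation occurs for $a_{11}$. Once these identities are established, the three sign checks close the argument, and the asymptotic stability holds uniformly across the admissible parameter range $c_1\in(1,2]$, $\kappa_1,\kappa_2\in(0,\tfrac12]$, $\lambda\in[0,\tfrac12)$.
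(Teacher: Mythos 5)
Your proposal is correct and takes essentially the same approach as the paper: linearize at the equilibrium and check that all eigenvalues of $Df(g_0)$ have negative real part, the paper computing $Df(g_0)=\mathrm{diag}\left(-1,\,1-c_1,\,c_1(\lambda-1)\right)$, which matches your diagonal entries $a_{11},a_{22},a_{33}$. Your triangularity observation and the bound $\lambda\le\tfrac{a_2}{2}<1$ are correct supporting details for the same computation.
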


\begin{proof}
From Theorem \ref{theoLW} we know that $(g_0,H_0)$ is a BRF generalized metric, which means an equilibrium point of the generalized Ricci flow.

If $f:=\left(f_1(x_1,x_2,x_3),f_2(x_1,x_2,x_3),f_3(x_1,x_2,x_3)\right)$ satisfies
$$\left\{ \begin{aligned}
  x_1'(t) &= f_1(x_1,x_2,x_3), \\
  x_2'(t) &= f_2(x_1,x_2,x_3) ,\\
  x_3'(t) &= f_3(x_1,x_2,x_3),
\end{aligned}\right.$$
as in \eqref{flow z11}, then $f(g_0)=0$ and its differential at $g_0$ is given by:

$$Df(g_0)=\begin{pmatrix}
              -1 & 0 & 0 \\
              0 & 1-c_1 & 0 \\
              0 & 0 & c_1 (-1 + \lambda)
            \end{pmatrix}.$$

As all the eigenvalues of the matrix are real and negative, the equilibrium point $g_0=(1,\tfrac{1}{c_1-1},\tfrac{c_1}{c_1-1})_{g_{\kil}}$ is asymptotically stable by \cite[Section 2.9]{Per}.
\end{proof}

%

\section{Global stability}

In the previous section we study the local behaviour of the non-linear dynamical system \eqref{flow z11}, the study of its global behaviour is substancially harder due to the possibility of caos.
\begin{definition}
An equilibrium point $x_0$ of a nonlinear system of differential equations $x'(t) = f(x)$ is \textit{globally stable} if it is stable and globally attractive, which means
$$\lim_{t\rightarrow \infty}\phi_t(x)=x_0 \text{ for all }  x \in D,$$
where $D$ is the domain of $f$.
\end{definition}
This definition says that an equilibrium point is globally stable if the set of points in the space that are asymptotic to it is the whole space. The Lyapunov stability theorems provide sufficient conditions for this type of stability as well as asymptotic stability, this approach is based on finding a scalar function of a state that satisfies certain properties. Namely, this function has to be continuously differentiable and positive definite. Besides,  if the first derivative of this function with respect to time is negative semidefinite along the state trajectories, then we can conclude that the equilibrium point is stable. Further, if the first derivative along every state trajectories is negative definite then we can conclude that the equilibrium point is globally stable.

As there is no universal method for creating Lyapunov functions for ordinary differential equations, this problem is far from trivial.

\begin{theorem}\cite[Section 2.9]{Per}
Let  $E$ be an open subset of $\RR^n$ cointaining $x_0$, the equilibrium point. Suppose that $f\in C^1(E)$ and that $f(x_0)=0$. Suppose further that there exits a real valued function $V \in C^1(E)$ (called \textit{Lyapunov function}) satisfying $V(x_0)=0$ and $V(x)>0$ if $x\neq x_0$. Then,
\begin{itemize}
  \item [a)] if $V'(x)\leq 0$ for all $x\in E$, $x_0$ is stable.
  \item [b)] If $V'(x)< 0$ for all $x\in E \setminus \{x_0\}$, $x_0$ is asymptotically stable.
  \item [c)] If $V'(x)> 0$ for all $x\in E \setminus \{x_0\}$, $x_0$ is unstable.
\end{itemize}
\end{theorem}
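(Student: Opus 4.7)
The overarching idea is to treat $V$ as a generalized energy whose monotonicity along the flow enforces geometric confinement. Note first that as soon as a trajectory $\phi_t(x)$ is trapped in a compact subset of $E$, the standard ODE escape lemma combined with $f\in C^1(E)$ guarantees it is defined for all $t\geq 0$; this global existence is a prerequisite for every part and in each case it will itself be a consequence of the monotonicity of $V$.

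For part (a), fix $\varepsilon>0$ with $\overline{N_\varepsilon(x_0)}\subset E$ and let $S_\varepsilon$ be its bounding sphere. Since $S_\varepsilon$ is compact and $V>0$ there, put $m:=\min_{y\in S_\varepsilon}V(y)>0$. By continuity of $V$ at $x_0$ together with $V(x_0)=0$, choose $\delta\in(0,\varepsilon)$ so that $V(x)<m$ for all $x\in N_\delta(x_0)$. For $x\in N_\delta(x_0)$ the map $t\mapsto V(\phi_t(x))$ is non-increasing in view of $V'\leq 0$; hence the trajectory cannot reach $S_\varepsilon$ (where $V\geq m$) and must remain inside $N_\varepsilon(x_0)$ for all $t\geq 0$.

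For part (b), keep $\delta$ as in (a). Given $x\in N_\delta(x_0)$ with $x\neq x_0$, the quantity $V(\phi_t(x))$ is positive, strictly decreasing and bounded below, hence converges to some $L\geq 0$. Suppose $L>0$. Continuity of $V$ and $V(x_0)=0$ yield $r>0$ with $V<L$ on $N_r(x_0)$, so the orbit lies in the compact annulus $K=\{y:r\leq\|y-x_0\|\leq\varepsilon\}$. On $K$ the continuous function $-V'$ is strictly positive, so its compact minimum $k>0$ gives $V(\phi_t(x))\leq V(x)-kt$, eventually negative---a contradiction. Thus $L=0$. Since the precompact orbit has every subsequential limit $y$ satisfying $V(y)=0$, positive-definiteness forces $y=x_0$, and so $\phi_t(x)\to x_0$.

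For part (c), unstable means there exists $\varepsilon_0>0$ such that every neighborhood of $x_0$ contains a point whose forward orbit leaves $N_{\varepsilon_0}(x_0)$. Fix $\varepsilon_0$ so that $\overline{N_{\varepsilon_0}(x_0)}\subset E$ and set $M:=\sup_{y\in\overline{N_{\varepsilon_0}(x_0)}}V(y)<\infty$. Given any $\delta>0$, pick $x\in N_\delta(x_0)\setminus\{x_0\}$, so $V(x)>0$. If the forward orbit stayed in $\overline{N_{\varepsilon_0}(x_0)}$, then monotonicity $V'>0$ would in fact confine it to the compact set $K'=\{y\in\overline{N_{\varepsilon_0}(x_0)}:V(y)\geq V(x)\}$, which excludes $x_0$; the compact minimum $k'>0$ of $V'$ on $K'$ would then give $V(\phi_t(x))\geq V(x)+k't$, violating $V\leq M$. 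The step I expect to be most delicate is the compactness argument shared by (b) and (c): one must bootstrap from the monotonicity of $V$ to show the orbit is confined to a compact subset of $E$, which simultaneously yields global existence in time and a uniform positive lower bound on $|V'|$ along the orbit that is strong enough to force the contradiction.
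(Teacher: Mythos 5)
Your proof is correct: it is the standard Lyapunov argument (confinement via sublevel sets and the minimum of $V$ on the bounding sphere for stability, then monotone convergence of $V$ along orbits combined with a compactness lower bound on $|V'|$ to force asymptotic stability and instability). The paper does not prove this statement itself---it quotes it from \cite[Section 2.9]{Per}---and your argument coincides with the classical proof given there, with the global-existence and precompactness issues you flag handled exactly as they should be.
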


Given the space $H/K$, where $H$ is a simple Lie group and $K\subseteq H$, one can consider the homogeneous space $M = H\times H/\Delta K$, such that $G_1 = G_2 = H$ and $c_1 = 2$. We are going to prove global stability for this special case when hyphothesis used before hold.

\begin{theorem}\label{global}
  Let $M=H\times H/\Delta K$,  ($c_1=2$) a homogeneous space such that $(H/K, g_{\kil})$ is  Einstein  (i.e. $\cas_{\chi} = \kappa \id_{\pg}$ for a constant $\kappa \in (0,\tfrac{1}{2}]$) and  $\kil_{\kg}=2\lambda\kil_{\hg}|_{\kg}$. Then the Bismut Ricci flat metric $g_0:=(1,1,2)_{g_{\kil}}$ is globally stable.
\end{theorem}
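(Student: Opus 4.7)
My plan is to combine a two-term Lyapunov function with LaSalle's invariance principle, exploiting the symmetry of the system when $c_1=2$ and $\kappa_1=\kappa_2=\kappa$. I would start by specializing Proposition~\ref{floww} to this case: the third equation collapses to
\[
 x_3'= \f{(x_3^2-4)\bigl[(2\lambda-1)(x_1^2+x_2^2)x_3^2-8\lambda x_1^2 x_2^2\bigr]}{4 x_1^2 x_2^2 x_3^2},
\]
and the first two acquire the $x_1\leftrightarrow x_2$ symmetry. A direct manipulation of them yields
\[
 x_1'-x_2'= -(x_1-x_2)\left[\f{\kappa(x_3^2+4)}{2 x_1 x_2 x_3}+\f{(1-2\kappa)(x_1+x_2)}{2 x_1^2 x_2^2}\right].
\]
Since $\lambda=a/2\in(0,\tfrac{1}{2})$ and $\kappa\in(0,\tfrac{1}{2}]$, the bracket in the $x_3$-equation is strictly negative on $\RR_{>0}^3$ and the bracket in the $(x_1-x_2)$-equation is strictly positive.

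\medskip

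I then take $V(x_1,x_2,x_3):=(x_3-2)^2+(x_1-x_2)^2$. The two factorizations immediately give $V'\leq 0$, with equality exactly on the line $L:=\{x_3=2,\ x_1=x_2\}$. Moreover the sign of $x_3'$ is opposite to that of $x_3^2-4$, so $x_3(t)$ is strictly monotone and confined to $[\min(x_3(0),2),\max(x_3(0),2)]\subset (0,\infty)$. With $x_3$ bounded, a barrier argument applied to the equation for $x_1'$ in \eqref{flow z11} (the positive terms $\tfrac{1-2\kappa}{2x_1^2}$ and $\tfrac{2\kappa}{x_1 x_3}$ dominate as $x_1\to 0^+$, while the constant $-\tfrac{1+2\kappa}{2}$ dominates as $x_1\to\infty$) shows that $x_1(t),x_2(t)$ stay in a fixed compact subset of $(0,\infty)$. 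Hence the flow exists for all $t\geq 0$ and every trajectory is precompact in $\RR_{>0}^3$.

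\medskip

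By LaSalle's invariance principle, the $\omega$-limit set is a nonempty, compact, invariant subset of $L$. Restricted to $L$ the system reduces to the one-dimensional ODE
\[
 x'=-\tfrac{(x-1)\bigl((1+2\kappa)x+(1-2\kappa)\bigr)}{2x^2},
\]
whose right-hand side has the sign of $1-x$ on $(0,\infty)$; hence $g_0$ is the unique equilibrium in $L$ and every non-equilibrium orbit is strictly monotone, exiting $L\cap\RR_{>0}^3$ in backward time. A compact invariant subset of $L$ therefore has to be $\{g_0\}$, so $x(t)\to g_0$. Together with the local asymptotic stability provided by Theorem~\ref{stable z11} (at $c_1=2$ the linearization eigenvalues are $-1,-1,-2(1-\lambda)$, all negative), this yields global stability. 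I expect the main difficulty to be the precompactness step: since $V$ does not directly control the individual sizes of $x_1$ or $x_2$, keeping those variables bounded away from $0$ and $\infty$ requires a separate qualitative analysis of the ODE near the boundary of $\RR_{>0}^3$.
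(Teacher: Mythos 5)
Your proposal is correct, and it follows a genuinely different route from the paper. The paper constructs a single strict Lyapunov function, $V=\tfrac{\lambda}{20}\bigl((x_1-1)^2+(x_2-1)^2\bigr)+\tfrac12(x_3-2)^2$, and establishes $\dot V<0$ away from $g_0$ through a long polynomial positivity argument: after exploiting the $x_1\leftrightarrow x_2$ symmetry to reduce to a two-variable polynomial $g(x,y)$, it splits $g$ into pieces $h_1,\dots,h_4$, treats three sign cases for auxiliary functions $g_1,g_2$, minimizes a quadratic in $x$, and localizes the roots of a quartic $q(y)$. You instead take the semidefinite function $(x_3-2)^2+(x_1-x_2)^2$, whose decay follows from two clean factorizations (I checked both: $x_1'-x_2'=-(x_1-x_2)\bigl[\tfrac{(1-2\kappa)(x_1+x_2)}{2x_1^2x_2^2}+\tfrac{\kappa(4+x_3^2)}{2x_1x_2x_3}\bigr]$ and the sign of $x_3'$ being opposite to that of $x_3^2-4$ are both right, using $\kappa\in(0,\tfrac12]$ and $\lambda<\tfrac12$), and then invoke LaSalle together with a barrier/precompactness argument and the one-dimensional dynamics on the invariant line $L=\{x_1=x_2,\ x_3=2\}$ (your reduced ODE on $L$ is also correct). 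What your approach buys is a far more conceptual proof with no polynomial estimates, and it handles explicitly the long-time existence and confinement of orbits in $\RR^3_{>0}$ — a point the paper's Lyapunov argument glosses over, since the sublevel sets of its $V$ are not compactly contained in the open octant. What the paper's approach buys is a strict Lyapunov function (potentially useful for quantitative decay rates) and no reliance on the invariance principle. One small caveat: your argument that the only compact invariant subset of $L\cap\RR^3_{>0}$ is $\{g_0\}$ is the right way to finish (monotone non-equilibrium orbits on $L$ escape every compact set in backward time), and the stability half of "globally stable" does need Theorem \ref{stable z11} as you say, since your $V$ vanishes on all of $L$ and so does not by itself give Lyapunov stability.
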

\begin{proof}
  Consider the function,
  $$V(x_1,x_2,x_3):= \frac{\tfrac{\lambda}{10}(x_1-1)^2+\tfrac{\lambda}{10}(x_2-1)^2+(x_3-2)^2}{2},$$
we are going to prove that this is a Lyapunov function for the dynamical system
\eqref{flow z11} taking $c_1=2$ and $\kappa:=\kappa_1=\kappa_2$, i.e.,

  \begin{align}\label{flow c12}
\left\{\begin{aligned}
  x_1'(t)= &\tfrac{x_3 - x_1^2 x_3 + \kappa \left(-2 x_3 - 2 x_1^2 x_3 + x_1 (4 + x_3^2)\right)}{2 x_1^2 x_3},\\
  x_2'(t)= &\tfrac{x_3 - x_2^2 x_3 + \kappa \left(-2 x_3 - 2 x_2^2 x_3 + x_2 (4 + x_3^2)\right)}{2 x_2^2 x_3},\\
  x_3'(t)=&-\tfrac{(x_3^2-4) \left((x_1^2 + x_2^2) x_3^2 -
    2 \lambda \left(x_2^2 x_3^2 + x_1^2 (-4 x_2^2 + x_3^2)\right)\right)}{4 x1^2 x2^2 x3^2}.\\
\end{aligned} \right.
\end{align}

    Set $E:=\{(x_1,x_2,x_3)\in \RR^3 \  / \  x_1,x_2,x_3>0\}$, it is obvious that $V(1,1,2)=0$  and  $V$ is clearly positive for all other $(x_1,x_2,x_3)\in E$, then $V$ is a Lyapunov function for the dynamical system \eqref{flow c12} if its derivative $F$ is negative definite on $E \setminus \{(1,1,2)\}$, where

  $$\begin{aligned}
    F(x_1,x_2, x_3):= & \frac{\lambda}{10}(x_1-1)x_1'+\frac{\lambda}{10}(x_2-1)x_2'+(x_3-2)x_3'= \\
   = &-\frac{1}{4 x_1^2 x_2^2 x_3^2}\left(64 \lambda x_1^2 x_2^2 \right.\\
   &+  \tfrac{8}{10}\lambda \kappa x_1^2 x_2 x_3 +  \tfrac{8}{10} \lambda\kappa x_1 x_2^2 x_3 -  \tfrac{8}{10}\lambda \kappa x_1^2 x_2^2 x_3 -  \tfrac{8}{10}\lambda \kappa x_1^2 x_2^2 x_3 - 32  \lambda x_1^2 x_2^2 x_3 \\
   &+  \tfrac{2}{10}\lambda x_1^2 x_3^2 + 8 x_1^2 x_3^2 -  \tfrac{4}{10}\lambda \kappa x_1^2 x_3^2 - 16 \lambda x_1^2 x_3^2 -  \tfrac{2}{10}\lambda x_1^2 x_2 x_3^2 +   \tfrac{4}{10}\lambda \kappa x_1^2 x_2 x_3^2 \\
   &+  \tfrac{2}{10}\lambda x_2^2 x_3^2 + 8 x_2^2 x_3^2 -  \tfrac{4}{10}\lambda \kappa x_2^2 x_3^2 - 16\lambda x_2^2 x_3^2 -  \tfrac{2}{10}\lambda x_1 x_2^2 x_3^2 +  \tfrac{4}{10}\lambda \kappa x_1 x_2^2 x_3^2 \\
   &-  \tfrac{2}{10}\lambda x_1^2 x_2^2 x_3^2 -  \tfrac{2}{10}\lambda x_1^2 x_2^2 x_3^2 -
    \tfrac{4}{10}\lambda \kappa x_1^2 x_2^2 x_3^2 -  \tfrac{4}{10}\lambda \kappa x_1^2 x_2^2 x_3^2 - 16 \lambda x_1^2 x_2^2 x_3^2 \\
   &+  \tfrac{2}{10}\lambda x_1^3 x_2^2 x_3^2 +
    \tfrac{4}{10}\lambda \kappa x_1^3 x_2^2 x_3^2 +  \tfrac{2}{10}\lambda x_1^2 x_2^3 x_3^2 +
    \tfrac{4}{10}\lambda \kappa x_1^2 x_2^3 x_3^2 \\
   & - 4  x_1^2 x_3^3 + 8 \lambda x_1^2 x_3^3 +
    \tfrac{2}{10}\lambda \kappa x_1^2 x_2 x_3^3 - 4 x_2^2 x_3^3 + 8 \lambda x_2^2 x_3^3 +
    \tfrac{2}{10}\lambda \kappa x_1 x_2^2 x_3^3 \\
   &-  \tfrac{2}{10}\lambda \kappa x_1^2 x_2^2 x_3^3 -  \tfrac{2}{10}\lambda \kappa x_1^2 x_2^2 x_3^3 + 8 \lambda x_1^2 x_2^2 x_3^3\\
   & - 2 x_1^2 x_3^4 + 4 \lambda x_1^2 x_3^4 - 2  x_2^2 x_3^4 + 4 \lambda x_2^2 x_3^4 +
   \left. x_1^2 x_3^5 - 2 \lambda x_1^2 x_3^5 +  x_2^2 x_3^5 - 2 \lambda x_2^2 x_3^5\right).
   \end{aligned}$$

  Due to the symmetries of $x_1$ and $x_2$, in order to prove the negative condition of $F$, we define a function $g$ such that,
   $$F(x_1,x_2,x_3)=-\tfrac{1}{4 x_1^2 x_2^2 x_3^2}\left(x_1^2 g(x_2,x_3)+x_2^2 g(x_1,x_3)\right),$$
  where
  $$\begin{aligned}
  g(x,y)&:= 32\lambda x^2 +  \tfrac{8}{10}\lambda \kappa x y -  \tfrac{8}{10}\lambda \kappa  x^2 y - 16 \lambda x^2 y + \tfrac{2}{10} \lambda y^2 + 8 y^2 -  \tfrac{4}{10}\lambda \kappa y^2 - 16 \lambda y^2\\
  &-  \tfrac{2}{10}\lambda x y^2 +  \tfrac{4}{10}\lambda \kappa x y^2 - \tfrac{2}{10}\lambda x^2 y^2 -  \tfrac{4}{10}\lambda \kappa x^2 y^2 - 8 \lambda x^2 y^2 +  \tfrac{2}{10}\lambda x^3 y^2 +  \tfrac{4}{10} \lambda \kappa x^3 y^2 \\
  & - 4y^3 + 8\lambda y^3 +  \tfrac{2}{10}\lambda \kappa x y^3 -
   \tfrac{2}{10}\lambda \kappa x^2 y^3 + 4 \lambda x^2 y^3 - 2  y^4 + 4 \lambda y^4 +  y^5 - 2 \lambda y^5.
  \end{aligned} $$

  Hence, it is enough to prove that $g(x,y)\geq0$ on $\tilde{E}$, where $\tilde{E}:=\{(x,y)\in \RR^2 \ / \ x,y>0\}$, and equality holds only in $(x,y)=(1,2)$. We split the function $g$ considering the terms where $\lambda$ or $\kappa$ are involved,
  $$g(x,y):=\frac{1}{10}\lambda h_1(x,y) +h_2(x,y)+\frac{2}{10}\lambda\kappa h_3(x,y) + 2 \lambda h_4(x,y),$$
  where
  $$\begin{aligned}
  h_1(x,y) &:=2 y^2 - 2  x y^2 - 2  x^2 y^2 + 2  x^3 y^2,\\
  h_2(x,y) &:= 8 y^2 - 4 y^3 - 2 y^4 + y^5,\\
  h_3(x,y) &:= 4 x y - 4 x^2 y - 2 y^2 + 2 x y^2 - 2 x^2 y^2 + 2 x^3 y^2 + x y^3 - x^2 y^3,\\
  h_4(x,y) &:= 16 x^2 - 8 x^2 y - 8 y^2 - 4 x^2 y^2 + 4 y^3 + 2 x^2 y^3 +
  2 y^4 - y^5.
\end{aligned}$$
Now, from its factorization it is clear that $h_1$ and $h_2$ are greater than zero on $\tilde{E} \setminus \{(1,2)\}$,
  $$\begin{aligned}
  h_1(x,y) &:=2  (x-1)^2 (1 + x) y^2,\\
  h_2(x,y) &:= (y-2)^2 y^2 (2 + y).\\
  \end{aligned} $$

The next step is defining the functions $g_1,g_2$ in the following way:

  $$\begin{aligned}
  g_1(x,y) &:=h_4(x,y)+\tfrac{1}{20}h_1(x,y),\\
  g_2(x,y) &:= \tfrac{1}{20}h_3(x,y),\\
\end{aligned}$$
such that
$$g(x,y):=h_2(x,y)+2\lambda g_1(x,y)+4\kappa \lambda g_2(x,y). $$

The proof that $g(x,y)>0$ on $\tilde{E} \setminus \{(1,2)\}$ is going to be by cases, as $h_2$ is already positive, we have to differenciate if $g_1$ and $g_2$ are positive or not on $\tilde{E} \setminus \{(1,2)\}$.

\begin{paracol}{3}
  \begin{figure}[ht]
  \centering
  \includegraphics[width=4.5cm]{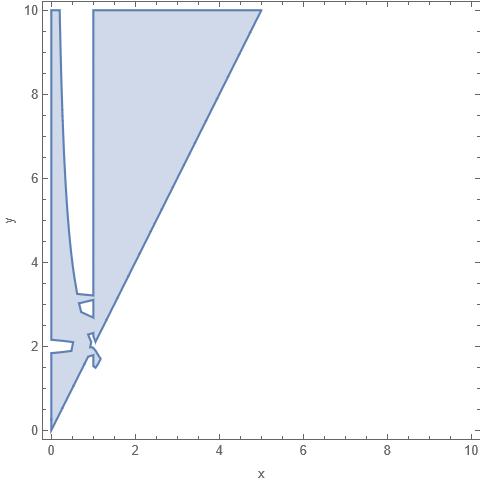}
  \captionsetup{font=small,labelfont=small,width=4cm}
  \caption{Region of Case 1: $g_1<0$ and $g_2<0$.}
  \label{g1neg g2neg}
  \end{figure}
\switchcolumn
  \begin{figure}[ht]
  \centering
  \includegraphics[width=4.5cm]{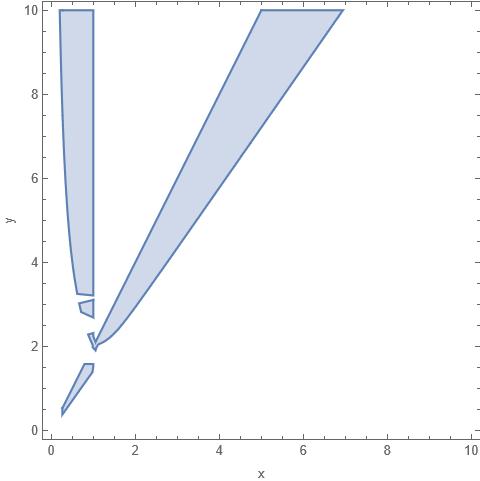}
  \captionsetup{font=small,labelfont=small,width=4cm}
  \caption{Region of Case 2: $g_1<0$  and $g_2>0$. }
  \label{g1neg g2pos}
  \end{figure}
\switchcolumn
  \begin{figure}[ht]
  \centering
  \includegraphics[width=4.5cm]{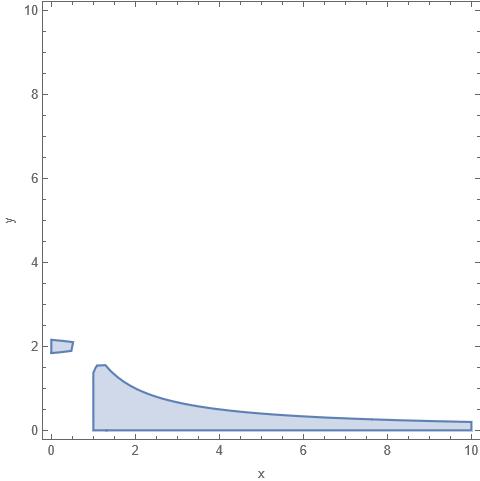}
  \captionsetup{font=small,labelfont=small,width=4cm}
  \caption{Region of Case 3: $g_1>0$  and $g_2<0$. }
  \label{g1pos g2neg}
  \end{figure}
\end{paracol}

\begin{itemize}[itemindent=*, leftmargin=0pt]
  \item \textbf{Case 1: $g_1<0$ and $g_2<0$},

  The $(x,y) \in \tilde{E}$ satisfying these conditions are represented in  Figure \ref{g1neg g2neg}. Note that $g_1<0$ if and only if $h_4<0$.

  Since $g_1<0$ and $2\lambda<1$, we have that $g_1<2\lambda g_1$. In the same way $g_2<4\lambda\kappa g_2$ and the following inequality holds for all $(x,y) \in \tilde{E}$,
  $$ h_2 +g_1+ g_2 \  < \ h_2 +2\lambda g_1+ 4 \lambda \kappa g_2.$$

  Our goal now is to prove that the function $h_2 +g_1+ g_2$ is greater than zero on $\tilde{E}$. It is a cubic function on $x$ such that $p(x)$ defined below is cuadratic,
  \begin{align*}
  (h_2+g_1+g_2)(x,y)&=x\left(\tfrac{1}{20}  y (4 + y^2) +
  x \tfrac{1}{20} (320 - 164 y - 84 y^2 + 39 y^3) + x^2 \tfrac{y^2}{5} \right)\\
  &:=xp(x)
  \end{align*}

  To understand $p(x) \in \RR[y][x]$, we are going to look for its critical point, which is always a minimum due to the fact that the quadratic coefficient $\tfrac{y^2}{5}$ is always positive.

  $$p'(x)=0 \text{ if and only if } x=\bar{x}:=\tfrac{1}{y^2}\left(\tfrac{21}{2}y^2-40+\tfrac{41}{2}y-\tfrac{39}{8}y^3\right).$$

  Since $p(0)=\tfrac{1}{20}  y (4 + y^2)$ is positive for $y>0$, and due to the continuity of $p(x)$, we can assure that if $\bar{x}<0$ then $p(x)\geq 0$ for all $x,y>0$ and hence $h_1+g_1+g_2\geq 0$ for all $x,y>0$ where equality holds only in $(x,y)=(1,2)$.

  Thus, the values of $y$ we have to study are those which make $\bar{x}>0$, which means $y \in I:=\left(\frac{1}{39}\left(81-\sqrt{321}\right),\tfrac{1}{39}\left(81+\sqrt{321}\right)\right)$. For this interval we want that $p(\bar{x})>0$, because if the minimum is greater than zero, the whole function is.
  $$p(\bar{x}):=-\tfrac{(y-2)^2 (25600 - 640 y - 13756 y^2 - 484 y^3 + 1521 y^4)}{
 320 y^2},$$
 and this is positive in the interval we are interested in if and only if $q(y):=(25600 - 640 y - 13756 y^2 - 484 y^3 + 1521 y^4)$ is negative for all $y \in I$. Note that as $q(\tfrac{7}{5})>0, \ q(\tfrac{3}{2})<0 $ and  $q(\tfrac{13}{5})<0, \ q(3)>0$ we can localize the positive roots of $q$ and conclude that there are no roots of $q$ in $I$, (see Figure \ref{qfun}), besides $q(2)=-10240<0$.

  \begin{figure}[h]
  \centering
  \includegraphics[width=5cm]{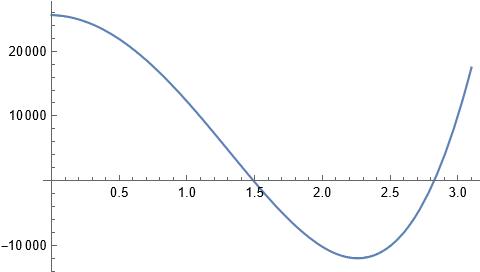}
  \caption{q(y)}
   \label{qfun}
  \end{figure}

  This implies that $p(\bar{x})>0$ for all $y \in I$ and therefore, $h_2+g_1+g_2>0$ on $\tilde{E} \setminus \{(1,2)\}$ as we wanted.

  \item \textbf{Case 2: $g_1<0$  and $g_2>0$},

  The $x,y>0$ satisfying these conditions are ploted in Figure \ref{g1neg g2pos}. As in Case 1, $g_1<0$ implies that $h_4<0$ and therefore $h_4<2\lambda h_4$.
  It is easy to see that
  $$(h_2+h_4)(x,y)=2 x^2 (y-2)^2 (2 + y),$$
  and hence,
  $$0<h_2+h_4< h_2 + 2\lambda h_4 < h_2 + 2\lambda h_4 + 2 \lambda \tfrac{1}{20}h_1,$$
  proving that $g(x,y)>0$ for all $(x,y)\in \tilde{E} \setminus \{(1,2)\}$ in this case.

  \item \textbf{Case 3: $g_1>0$  and $g_2<0$},

  This is the last case we have to analyze, it includes the cases when $x$ tends to infinity and y tends to $0$ as the plot shows, see Figure \ref{g1pos g2neg}.

  Consider $g$ factorized in the following way:
  $$g (x,y)=h_2+2\lambda(g_1+2\kappa g_2).$$

  Since $g_2<0$ in this case, $g_2<2\kappa g_2$, therefore if $g_1+g_2$ is positive this case is proved.

  On the other hand, if $g_1+g_2<0$, since  $g_1>0$, we have the following inequalities:
  $$ g_1+g_2< 2\kappa (g_1 + g_2)< 2\kappa g_1 + 2\kappa g_2 < g_1+ 2\kappa g_2. $$

  If the last expression is still negative for some $x,y>0$ satisfying the hyphothesis of this case, one obtains that:
  $$h_2+g_1+g_2< h_2 + 2\kappa(g_1+g_2)< h_2 + (g_1+2\kappa g_2) < h_2+2\lambda ((g_1+2\kappa g_2) = g(x,y),$$\end{itemize}
  and the proof is complete due to the proof of Case 1. \end{proof}

\section{Generalized Ricci flow on the Lie group $\SO(n)$}\label{so}

\subsection{Set up for any compact Lie group}

Before introducing the main problem of this section, we go over some known facts for compact Lie groups, see \cite[Section 6]{BRFs}.

Let $M^d=G$ be a compact semisimple connected Lie group and $\ggo$ its Lie algebra. By \cite[Chapter V]{Bre}, we know that every class in the set of all closed $3$-forms of $G$, $H^3(G)$ has a unique bi-invariant representative called \textit{Cartan $3$-forms} of the form:
$$\overline{Q}(X,Y,Z):=Q([X,Y],Z), \quad \forall X,Y,Z \in \ggo.$$

We note that if $G$ is simple, then $H^3(G)=\RR\left[H_{\kil}\right]$ where $H_{\kil}:=\overline{\kil_{\ggo}}$, and $\kil_{\ggo}$ is the Killing form of $\ggo$. It is well known that Cartan $3$-forms are harmonic with respect to any bi-invariant metric on $G$.

Using \cite[Section 3]{H3} we can consider a bi-invariant metric $g_b$ on G, a $g_b$-orthonormal basis $\left\{e_1, \dots e_d\right\}$ and a left-invariant metric $g$ on $G$, such that $g(e_i,e_j)=x_i\delta_{ij}$, we denote it by $g=(x_1,\dots,x_d)$. The ordered basis $\left\{e_1, \dots e_d\right\}$ determines structural constants given by
$$c_{ij}^{k}:=g_b([e_i,e_j],e_k),$$
and using \cite[Corollary 3.2 (ii)]{H3}:
\begin{equation}\label{harmo}
 H_{\kil} \text{ is } g\text{-harmonic if and only if,} \sum_{1\leq i,j \leq n}\frac{c_{ij}^kc_{ij}^l}{x_ix_j}=0 \quad \forall k,l \text{ such that } x_k\neq x_l.
\end{equation}

Therefore, considering the generalized Ricci flow, if \eqref{harmo} holds, then $H$ will remain fix along any solution and the flow will be given only by the equation
\begin{equation}\label{flow}
\tfrac{\partial}{\partial t}g(t)=-2\Ricci(g(t))+\tfrac{1}{2}(H(t))^2_{g(t)}.
\end{equation}

We are going to use the formulas calculated in \cite[Section 6]{BRFs}, which are the following for $H_b:=g_b([\cdot,\cdot],\cdot)$:

\begin{equation}\label{Hk2}
(H_b)_g^2(e_k,e_l) = \sum_{i,j} \tfrac{1}{x_ix_j} g_b([e_k,e_i],e_j)g_b([e_l,e_i],e_j)
= \sum_{i,j} \tfrac{c_{ij}^kc_{ij}^l}{x_ix_j}, \qquad\forall k,l.
\end{equation}
Concerning the Ricci curvature, it is well known that
\begin{equation}\label{Rc}
\ricci(g)(e_k,e_l) = \unm\sum_{i,j} c_{ij}^kc_{ij}^l-\unc\sum_{i,j} c_{ij}^kc_{ij}^l\tfrac{x_i^2+x_j^2-x_kx_l}{x_ix_j}, \qquad\forall k, l.
\end{equation}

\subsection{Case $\SO(n)$}
For this section we consider $G=\SO(n)$, the usual basis of its Lie algebra is $\beta=\{e_{rs}:=E_{rs}-E_{sr}\}$ which is \textit{nice}, meaning that it satisfies  $c_{ij}^kc_{ij}^l=0$ for all $k\neq l$.
For computations we enumerate the elements of the basis such that $\beta:=\{e_1,e_2,\dots, e_{\tfrac{n(n-1)}{2}}\}$, this basis is orthogonal with respect to $g_{\kil}$, the Killing metric of $\SO(n)$, and we can write any diagonal left-invariant metric as $g=(x_1,\dots,x_{\tfrac{n(n-1)}{2}})_{g_{\kil}}$.

\begin{proposition}
 Consider $G=\SO(n)$ and fix the Killing metric $g_{\kil}$ as a background metric. For $\left(g(t)=(x_1(t),\dots,x_{\tfrac{n(n-1)}{2}}(t))_{g_{\kil}},H_{\kil}\right)$  the generalized Ricci flow is given by
\begin{equation} \label{flow so}
x_k'(t)=- \sum_{i,j}(c_{ij}^k)^2+\frac{1}{2}\sum_{i,j}(c_{ij}^k)^2\frac{x_i^2+x_j^2-x_k^2}{x_ix_j}+\frac{1}{2}\sum_{i,j}\frac{(c_{ij}^k)^2}{x_ix_j} \quad \forall k= 1, \dots, \tfrac{n(n-1)}{2}.
 \end{equation}
\end{proposition}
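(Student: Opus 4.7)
The proof is essentially a direct computation that combines the niceness of the basis with formulas \eqref{harmo}, \eqref{Hk2} and \eqref{Rc}, so my plan is to first reduce the flow to its diagonal part and then just substitute.

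First I would use that the basis $\{e_{rs}\}$ is nice, meaning $c_{ij}^k c_{ij}^l=0$ whenever $k\neq l$, to check two preservation statements at once. On one hand, niceness makes every cross-sum $\sum_{i,j} c_{ij}^k c_{ij}^l /(x_i x_j)$ in \eqref{harmo} vanish identically, so $H_{\kil}$ is $g(t)$-harmonic for every diagonal left-invariant metric $g(t)=(x_1(t),\dots,x_{n(n-1)/2}(t))_{g_{\kil}}$. Therefore the second equation of \eqref{GRF} forces $H(t)\equiv H_{\kil}$ along any diagonal solution. On the other hand, plugging $k\neq l$ into \eqref{Hk2} and \eqref{Rc} gives $(H_{\kil})^2_g(e_k,e_l)=0$ and $\ricci(g)(e_k,e_l)=0$; consequently $\partial_t g(t)$ has no off-diagonal components with respect to $\{e_{rs}\}$ and the space of $g_{\kil}$-diagonal left-invariant metrics is invariant under the generalized Ricci flow.

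Once these two invariance statements are in place, the flow reduces to \eqref{flow}, and it suffices to evaluate both sides on each diagonal pair $(e_k,e_k)$. Writing $g(t)(e_k,e_k)=x_k(t)\, g_{\kil}(e_k,e_k)$, the left-hand side produces $x_k'(t)$ (up to a common positive factor that cancels on both sides of the equation). For the right-hand side, I would specialize \eqref{Rc} at $k=l$, getting
\[
\ricci(g)(e_k,e_k)=\tfrac{1}{2}\sum_{i,j}(c_{ij}^k)^2-\tfrac{1}{4}\sum_{i,j}(c_{ij}^k)^2\,\tfrac{x_i^2+x_j^2-x_k^2}{x_i x_j},
\]
and specialize \eqref{Hk2} at $k=l$ to get $(H_{\kil})^2_g(e_k,e_k)=\sum_{i,j}(c_{ij}^k)^2/(x_i x_j)$. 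Substituting these into $-2\ricci(g)(e_k,e_k)+\tfrac{1}{2}(H_{\kil})^2_g(e_k,e_k)$ yields exactly the three summands displayed in \eqref{flow so}.

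I do not expect a genuine obstacle; the only delicate point worth stating carefully is the bookkeeping between the $g_b$-orthonormal setting in which formulas \eqref{Hk2} and \eqref{Rc} are written and the $g_{\kil}$-orthogonal basis $\{e_{rs}\}$ used here. Since all $g_{\kil}(e_{rs},e_{rs})$ are equal (a common multiple of $n-2$), rescaling the basis to be $g_{\kil}$-orthonormal rescales the structure constants by a uniform positive factor and leaves the nice property, as well as the form of the resulting ODE, unchanged. With this normalization remark the computation is clean, and the proposition follows.
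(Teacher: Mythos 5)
Your proposal is correct and follows essentially the same route as the paper: niceness of the basis kills all off-diagonal terms (so diagonal metrics are preserved and $H_{\kil}$ stays harmonic), and then substituting \eqref{Hk2} and \eqref{Rc} at $k=l$ into \eqref{flow} gives the three summands of \eqref{flow so}. Your closing remark on normalizing the basis $\{e_{rs}\}$ to be $g_{\kil}$-orthonormal is a point the paper leaves implicit, and it is handled correctly.
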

\begin{proof}
 Replacing the formulas \eqref{Hk2} and \eqref{Rc} in \eqref{flow} the proposition follows as the basis $\beta$ is nice.
\end{proof}

It is clear that $g_{\kil}=(1,1,\dots,1)_{g_{\kil}}$ is a Bismut Ricci flat generalized metric, which means a fix point of the above system. To study the dynamical stability of this point, we consider the linearization of the given non-linear system.

\begin{theorem}
The Killing metic $g_{\kil}$ on the compact Lie group $\SO(n)$ is asymptotically stable for the dynamical system \eqref{flow so}.
\end{theorem}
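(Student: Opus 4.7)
The plan is to proceed exactly as in the proof of Theorem \ref{stable z11}: show that the Killing metric $g_{\kil}=(1,\dots,1)_{g_{\kil}}$ is a hyperbolic equilibrium of \eqref{flow so} whose Jacobian has only real, strictly negative eigenvalues, and then invoke the linearization criterion of \cite[Section 2.9]{Per} to conclude asymptotic stability and the existence of a convergence neighborhood $U$.

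First I would verify that $\mathbf{1}:=(1,\dots,1)$ annihilates the vector field $f$ defined by the right-hand side of \eqref{flow so}: substituting $x_i=1$, each of the three summands collapses to a multiple of $S_k:=\sum_{i,j}(c_{ij}^k)^2$ and the contributions cancel. This also follows a priori from the fact that $(g_{\kil},H_{\kil})$ is Bismut Ricci flat, since $g_{\kil}$ is bi-invariant and $H_{\kil}$ is the Cartan $3$-form.

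The crux of the argument is the computation of $Df(\mathbf{1})$. Differentiating $f_k$ summand by summand: the first summand is constant; for the middle summand, a quotient-rule computation yields $\partial_{x_l}\bigl(\tfrac{x_i^2+x_j^2-x_k^2}{x_ix_j}\bigr)\big|_{\mathbf{1}}=\delta_{il}+\delta_{jl}-2\delta_{kl}$, and using $(c_{ij}^k)^2=(c_{ji}^k)^2$ this summand contributes $\sum_i (c_{il}^k)^2-\delta_{kl}\,S_k$; the third summand contributes $-\sum_i (c_{il}^k)^2$. The intermediate $\sum_i (c_{il}^k)^2$ terms cancel, leaving
$$\frac{\partial f_k}{\partial x_l}\Big|_{\mathbf{1}} \;=\; -\,\delta_{kl}\,S_k.$$
Hence $Df(\mathbf{1})$ is diagonal with strictly negative entries $-S_k$ (nonzero because $\SO(n)$ is semisimple, so every basis element $e_k$ has a nontrivial bracket), and \cite[Section 2.9]{Per} then delivers asymptotic stability.

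The only delicate point I foresee is the off-diagonal vanishing of $Df(\mathbf{1})$ for $k\ne l$: it is the cancellation between the middle and last summands that produces it, and this requires careful tracking of $(c_{ij}^k)^2=(c_{ji}^k)^2$ together with the fact that all background values equal $1$. The nice-basis identity $c_{ij}^k c_{ij}^l=0$ (for $k\ne l$) is used separately, upstream, to derive \eqref{flow so} as a well-posed ODE on the orthant of diagonal metrics and to guarantee that the flow preserves diagonality. Once the diagonal form of $Df(\mathbf{1})$ is established, the rest is routine linear algebra and no Lyapunov function is needed, in contrast with the global analysis of Theorem \ref{global}.
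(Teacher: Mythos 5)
Your proposal is correct and follows essentially the same route as the paper: verify that $(1,\dots,1)$ is an equilibrium, compute $Df(\mathbf{1})$ explicitly, observe that it is diagonal with entries $-\sum_{i,j}(c_{ij}^k)^2<0$, and invoke the linearization criterion of \cite[Section 2.9]{Per}. The only cosmetic difference is that the paper absorbs the $(H_b)_g^2$ term into the Ricci term before differentiating, whereas you differentiate the two summands separately and observe the cancellation of the $\sum_i(c_{il}^k)^2$ contributions; both computations yield the same Jacobian.
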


\begin{proof}
 We define functions
 $$f_k(x_1,\dots,x_{\tfrac{n(n-1)}{2}}):=-\sum_{i,j}(c_{ij}^k)^2+\frac{1}{2}\sum_{i,j}(c_{ij}^k)^2\frac{x_i^2+x_j^2-x_k^2+1}{x_ix_j}, \quad \text{for } k= 1, \dots, \tfrac{n(n-1)}{2},$$
such that $x'(t)=f(x)$. Therefore,  the differential of $f$ is given by:
$$
\left\{\begin{aligned}
\frac{\partial f_k}{\partial x_k}=& \frac{1}{2}\sum_{i,j}\frac{(c_{ij}^k)^2}{x_ix_j}(-2x_k) \qquad \forall k= 1, \dots, \tfrac{n(n-1)}{2},\\
\frac{\partial f_k}{\partial x_i}=& \frac{1}{2}\sum_{i,j}(c_{ij}^k)^2\frac{x_i^2-x_j^2+x_k^2-1}{x_i^2x_j} \qquad \forall k\neq i. \\
\end{aligned} \right.
$$

Hence, on the Killing metric $g_{\kil}$,

$$Df(1,1,\dots,1)=\begin{pmatrix}
              -\sum_{i,j}(c_{ij}^k)^2 & 0 & \dots& \dots & 0 \\
              0 & -\sum_{i,j}(c_{ij}^k)^2& \dots& \dots& 0 \\
              0 & 0 & \ddots & \dots & 0 \\
              0 &  \dots& \dots& 0 & -\sum_{i,j}(c_{ij}^k)^2
\end{pmatrix}.$$

As all its eigenvalues are reals and negative the proof is complete.
\end{proof}

\end{document}